%
%
%

\documentclass{amsproc}

\usepackage{amsmath}

\usepackage{xcolor}

 \usepackage{amssymb}

\usepackage{url}
\usepackage{supertabular} 

\usepackage{mathabx}
\usepackage{multirow} 
\usepackage{graphics}

\usepackage{float}

\usepackage{graphicx}

\allowdisplaybreaks 

\newcommand\scalemath[2]{\scalebox{#1}{\mbox{\ensuremath{\displaystyle #2}}}} 

\newcommand{\ud}{\mathrm{d}}

\renewcommand{\phi}{\varphi}

\newtheorem{theorem}{Theorem}[section]
\newtheorem{lemma}[theorem]{Lemma}
\newtheorem{corollary}[theorem]{Corollary}

\theoremstyle{definition}

\theoremstyle{remark}
\newtheorem{remark}[theorem]{Remark}

\theoremstyle{remark}

\theoremstyle{conjecture}
\newtheorem{conjecture}[theorem]{Conjecture}

\numberwithin{equation}{section}



\title
[Ramanujan--Fine integrals for level $10$]{Ramanujan--Fine integrals for level $10$}
\date{\today}
\author{Shaun Cooper}
\author{Timothy Huber}
\author{Jeffery Opoku}
\address{
Institute of Mathematical and Computational Sciences, Massey University,
Private Bag 102904, North Shore Mail Centre, Auckland, New Zealand
E-mail: s.cooper@massey.ac.nz
}
\address{
School of Mathematical and Statistical Sciences, University of Texas Rio Grande
Valley, Edinburg, TX 78539, USA
E-mail: timothy.huber@utrgv.edu
}
\address{
School of Mathematical and Statistical Sciences, University of Texas Rio Grande
Valley, Edinburg, TX 78539, USA
E-mail: jeffery.opoku01@utrgv.edu
}

\subjclass[2000]{Primary---11F11; Secondary---33E05}
\keywords{
Almkvist--Zudilin numbers,
Ap{\'e}ry numbers,
Dedekind eta function,
Domb numbers,
eta quotient,
Ramanujan--Fine integral,
Ramanujan--G{\"o}llnitz--Gordon continued fraction,
Ramanujan's cubic continued fraction,
Ramanujan's lost notebook,
Rogers--Ramanujan continued fraction.
\\
Status: published in Ramanujan J. {\bf 66}, 28 (2025). https://doi.org/10.1007/s11139-024-00995-3}

\usepackage{hyperref}

\hypersetup{
  colorlinks = true,
  urlcolor = blue,
  linkcolor = black,
  citecolor = blue
}

\dedicatory{Dedicated to George Andrews and Bruce Berndt in celebration of their 85th birthdays}

\begin{document}

\begin{abstract}
We investigate the question of when an eta quotient is a derivative of a formal power series with integer coefficients
and present an analysis in the case of level 10. As a consequence, we establish and classify an infinite number
of integral evaluations such as
$$
\int_0^{e^{-2\pi/\sqrt{10}}} q\prod_{j=1}^\infty \frac{(1-q^j)^3(1-q^{10j})^8}{(1-q^{5j})^7} \ud q = \frac14\left(\sqrt{10-4\sqrt{5}}-1\right).
$$
We describe how the results were found and give reasons for why it is reasonable to conjecture that the list is complete for level 10.
\end{abstract}

\maketitle
\section{Introduction}
\label{S:1}
In the Lost Notebook~\cite[p. 46]{lost} S. Ramanujan gave the following identity
\begin{equation}
\label{outrageous}
\cfrac{q^{1/5}}{1+\cfrac{q}{1+\cfrac{q^{2}}{1+\cfrac{q^{3}}{1+\cdots}}}}
= \frac{\sqrt{5}-1}{2} \exp \left\{-\frac15 \int_q^1 \prod_{j=1}^\infty \frac{(1-t^j)^5}{(1-t^{5j})} \,\frac{\ud t}{t}\right\}
\end{equation}
which has been described as ``outrageous''  by G. E. Andrews~\cite{andrews}.
Andrews noted that by a result of L. J. Rogers, the continued fraction on the left hand side has an infinite product expansion given by
$$
q^{1/5}\prod_{j=1}^\infty\frac{(1-q^{5j-4})(1-q^{5j-1})}{(1-q^{5j-3})(1-q^{5j-2})}
$$
and therefore Ramanujan's identity is equivalent by logarithmic differentiation to the nowadays well-known series identity
$$
1-5\sum_{j=1}^{\infty}\left(\frac{j}{5}\right)\frac{jq^{j}}{1-q^{j}} = \prod_{j=1}^{\infty}\frac{(1-q^{j})^{5}}{(1-q^{5j})}, 
\; \text{where} \; \left(\frac{j}{5}\right) = 
\begin{cases} 
1 & \text{if } j \equiv 1 \text{ or } 4\!\! \pmod{5}, \\
-1 & \text{if } j \equiv 2 \text{ or } 3\!\! \pmod{5}, \\
0 & \text{otherwise}
\end{cases}
$$
e.g., see \cite[pp. 257--262]{Part3} or \cite[pp. 310, 350]{cooperbook}.
The value $ \frac{\sqrt{5}-1}{2}$ on the right hand side of Ramanujan's identity comes from exponentiating the constant of integration and is determined
by letting $q\rightarrow 1$ on both sides. A generalisation of~\eqref{outrageous} was selected as one of the ten most fascinating formulas from
Ramanujan's Lost Notebook by G. E. Andrews and B. C. Berndt~\cite{hitparade}.

Another example in this spirit, due to N.~Fine~\cite[pp. 88--90]{fine}, is the surprising integral
\begin{equation}
\label{fine0}
\int_0^{e^{-\pi}} \prod_{j=1}^\infty \frac{(1-q^{2j})^{20}}{(1-q^j)^{16}} \ud q = \frac{1}{16}.
\end{equation}
This is the special case $q=e^{-\pi}$ of the more general result
\begin{equation}
\label{fine1}
\int_0^{q} \prod_{j=1}^\infty \frac{(1-t^{2j})^{20}}{(1-t^j)^{16}} \ud t = q\prod_{j=1}^\infty \frac{(1-q^{4j})^8}{(1-q^j)^8} = \frac{\eta^8(4\tau)}{\eta^8(\tau)}
\end{equation}
where $q=e^{2\pi i \tau}$, and $\eta(\tau)$ is Dedekind's eta function defined by
$$
\eta(\tau) = q^{1/24}\prod_{j=1}^\infty (1-q^j).
$$
The value $1/16$ in~\eqref{fine0} is a simple consequence of substituting the value $\tau=i/2$ into the eta functions on the right hand side of~\eqref{fine1} and
using the functional equation of the Dedekind eta function to compute the value; see \cite[p. 86]{fine}.
By logarithmic differentiation, the identity~\eqref{fine1} is equivalent to 
$$
\prod_{j=1}^\infty \frac{(1-q^{2j})^{20}}{(1-q^j)^8(1-q^{4j})^8} = 1+8\sum_{j= 1}^\infty \frac{jq^j}{1-q^j}-32\sum_{j= 1}^\infty \frac{jq^{4j}}{1-q^{4j}},
$$
a result known as Jacobi's sum of four squares identity, for which many proofs exist, e.g., \cite[pp. 61, 79]{spirit}, \cite[(3.67)]{cooperbook},
\cite{cooperlam}, \cite{hirschhorn4sq}, \cite[pp. 23, 213]{hirschhorn} or~\cite[p. 281]{johnson}.

Fine also gave the identities
\begin{equation}
\label{fine2}
\int_0^{e^{-\pi/\sqrt{2}}} \prod_{n=1}^\infty\frac{(1-q^{2n})^{8}(1-q^{4n})^4}{(1-q^n)^{8}}\, dq
= \frac{1}{\sqrt{32}},
\end{equation}
and
\begin{equation}
\label{fine3}
\int_0^{e^{-\pi/\sqrt{3}}} \prod_{n=1}^\infty\frac{(1-q^{2n})^{14}(1-q^{6n})^6}{(1-q^n)^{8}(1-q^{4n})^8}\, dq
= \frac13.
\end{equation}

In light of these examples, we will refer to an integral
evaluation such as~\eqref{fine0}, \eqref{fine2} and~\eqref{fine3} as a Ramanujan--Fine integral.
Further examples of Ramanujan--Fine integrals have been given in~\cite{aygin}, \cite{coopernonic}, \cite{doyle} and~\cite{zhang}.

By the discussion above, the existence of Ramanujan--Fine integrals relies on there being quotients of eta functions $u(q)$ and $v(q)$ such that
$$
u(q) = q\frac{\ud}{\ud q} v(q)
$$
with the property that $v(e^{-2\pi/\sqrt{N}})$
can be evaluated for some $N \in \mathbb{Z}^+$ or $N \in \mathbb{Q}^+$.
As an example, corresponding to Fine's integral~\eqref{fine0} we have
\begin{align*}
u(q) &= q\prod_{j=1}^\infty \frac{(1-q^{2j})^{20}}{(1-q^j)^{16}} = \frac{\eta^{20}(2\tau)}{\eta^{16}(\tau)}, \\
v(q) &=  q\prod_{j=1}^\infty \frac{(1-q^{4j})^8}{(1-q^j)^8} = \frac{\eta^{8}(4\tau)}{\eta^8(\tau)} \quad\text{and}\quad v(e^{-\pi}) = \frac1{16}.
\end{align*}
Instances of when $u(q)$ and $v(q)$ are quotients of eta functions have been classified by Z.~S.~Aygin and P.~C.~Toh~\cite{aygin}. They found
a total of 203 distinct pairs from level 4 to 36, and for each of those levels their list is complete. The number of such identities of each level is
given in Table~\ref{Table:0}, reproduced exactly from~\cite{aygin}. No further examples were found for other levels less than 100,
so Aygin and Toh conjectured their list is complete.

\begin{table}
\caption{Number of distinct identities for each level. Reproduced from~\cite{aygin}.}
\centering
\begin{tabular}{p{3cm}p{0.5cm}p{0.5cm}p{0.5cm}p{0.5cm}p{0.7cm}p{0.5cm}p{0.5cm}p{0.5cm}p{0.5cm}p{0.5cm}}
\hline
Level & $4$ & $6$ & $8$ & $9$ & $12$ & $16$ & $18$ & $20$ & $24$ & $36$ \\
\hline
No. of identities & 3 & 10 & 4 & 1 & 100 & 4 & 12 & 12 & 32 & 25 \\
\hline
\end{tabular}
\label{Table:0}
\end{table}

In order to find examples for levels not in Table~\ref{Table:0}
we consider the more general question of when an eta quotient is a rational multiple of a derivative of a power series with integer coefficients.
A computer search reveals a huge number of examples, and a complete classification is beyond the scope of this work.

This work is about the specific case of level 10 which does not occur in Aygin and Toh's Table~\ref{Table:0}. 
We find eight examples that fit naturally into two groups of four, along with a separate set of four infinite families.
Analysis of the results involves Ramanujan's level~$10$ function~$k$.
We show that the existence of Ramanujan--Fine integrals reduces to a question about when the indefinite integral
$$
\int \frac{(1-k^2)^{a_1}(1+k-k^2)^{a_2}(1-4k-k^2)^{a_3}}{k^{a_1+a_2+a_3+1}}\, \ud k
$$
is a rational function of $k$. We give conditions on the integers $a_1$, $a_2$ and $a_3$ and prove that the
integral is rational in these cases. Our proof requires properties of the ${}_2F_1$ hypergeometric
series to evaluate a limit which appears to be of independent interest. We suggest in Conjecture~\ref{Conj:1} 
that our list of integers  integers $a_1$, $a_2$ and $a_3$ is complete.

This work is organised as follows.
In Sections~\ref{S:2} and~\ref{S:3} we appeal to the theory of Ap{\'e}ry and related numbers
to survey further integrals that are analogues of Ramanujan's~\eqref{outrageous} and
Fine's~\eqref{fine0}. Some of the results are known and others are new. In Section~\ref{S:4} we describe a computer
search to find Ramanujan--Fine integrals for level~10. Specifically, we describe in detail how
four examples were initially found, prove that the four examples have the claimed properties, and exhibit Ramanujan--Fine integrals in three of the cases.
In Section~\ref{S:5} we use Ramanujan's parameter~$k$ to extend the search to find further examples, including an infinite family.
The proof that the infinite family has the claimed properties 
leads to an interesting limit that is evaluated in Section~\ref{S:6}. Finally in Section~\ref{S:7} we calculate the value of
$k(e^{-2\pi/\sqrt{10}})$ that is required to evaluate Ramanujan--Fine integrals like the example in the abstract.

\bigskip
It will always be assumed that $\tau$ is a complex number with positive imaginary part. Let $q=e^{2\pi i\tau}$
so that $q$ is a complex variable with $|q|<1$. Most of the time we will ignore $\tau$ and work directly with~$q$, but there
will be occasions when we refer back to~$\tau$. Euler's product $E(q)$ is defined by
\begin{equation}
\label{eulerproduct}
E(q) = \prod_{j=1}^\infty (1-q^j).
\end{equation}
Dedekind's eta function is defined by
$$
\eta(\tau) = q^{1/24} \prod_{j=1}^\infty (1-q^j).
$$
It satisfies the well-known functional equation, e.g., see~\cite[p. 70]{hhc} or~\cite[p. 135]{cooperbook},
\begin{equation}
\label{E:eta}
\eta\left(\frac{-1}{\tau}\right) = \sqrt{\frac{\tau}{i}}\,\eta(\tau).
\end{equation}
For any positive integer $N$, let $\eta_N$ be defined by
\begin{equation}
\label{etaN}
\eta_N = \eta(N\tau) =  q^{N/24}E(q^N).
\end{equation}
Thus Fine's example~\eqref{fine0} can be written using either of the two notations as
$$
\int_0^{e^{-\pi}} \frac{E(q^2)^{20}}{E(q)^{16}} \ud q  = \int_0^{e^{-\pi}} \frac{\eta_2^{20}}{\eta_1^{16}} \;\frac{\ud q}{q} = \frac{1}{16}.
$$
When $\eta_N$ notation is used in Ramanujan--Fine integrals, the measure is always~$\frac{\ud q}q$.

\section{Examples from Ap{\'e}ry, Domb and Almkvist--Zudilin numbers}
\label{S:2}
The goal of this section is to quickly exhibit a source of further Ramanujan--Fine integrals by invoking
results from the theory of Ap{\'e}ry, Domb, Almkvist--Zudilin and other related numbers. Some of the integral evaluations are new.
It is also worthwhile to see how some of the known integrals fit in this theory. 

\begin{theorem}
\label{T:1}
Let $r(q)$ be the infinite product representation of the Rogers--Ramanujan continued fraction, so that (e.g., see \cite[pp. 155--160]{spirit} or \cite[pp. 295--302]{cooperbook})
\begin{equation}
\label{rrcf}
r(q) = q^{1/5} \prod_{j=1}^\infty \frac{(1-q^{5j-4})(1-q^{5j-1})}{(1-q^{5j-3})(1-q^{5j-2})}
\end{equation}
and let $\alpha= (1+\sqrt{5})/2$. Let $E(q)$ be defined by~\eqref{eulerproduct}.
Then:
\begin{equation}
\label{Integral1}
\int_0^q \frac{E(t)^{5}}{E(t^5)} r(t)^5 \;\frac{\ud t}{t} = r(q)^5 = \sqrt{\alpha^{10}+1}-\alpha^5 \quad \text{if} \quad q=e^{-2\pi/\sqrt{5}};
\end{equation}

\begin{equation}
\label{Integral2}
\int_0^q \frac{E(t^2)^{8}E(t^3)^6}{E(t)^{10}} \ud t = q\frac{E(q^2)E(q^6)^{5}}{E(q)^{5}E(q^3)} =
\begin{cases}  
\frac{\sqrt{2}}{12} & \text{if} \;\quad q=e^{-2\pi/\sqrt{6}}, \\[8pt]
\frac{\sqrt{3}-1}{24} & \text{if} \;\quad q=e^{-2\pi/\sqrt{3}}, \\[8pt]
\frac{\sqrt{6}-2}{36} & \text{if} \;\quad q=e^{-2\pi/\sqrt{2}}; \\
\end{cases}
\end{equation}

\begin{equation}
\label{Integral3}
\int_0^q \frac{E(t)^{8}E(t^6)^6}{E(t^2)^{10}} \ud t = q\frac{E(q)^{4}E(q^6)^{8}}{E(q^2)^{8}E(q^3)^{4}} =
\begin{cases}  
\frac{(\sqrt{2}-1)^2}{3} & \text{if} \;\quad q=e^{-2\pi/\sqrt{6}}, \\[8pt]
\frac{(2 - \sqrt{3})^2}{3} & \text{if} \;\quad q=e^{-2\pi/\sqrt{3}}, \\[8pt]
\frac{(\sqrt{6}-2)^2}{18} & \text{if} \;\quad q=e^{-2\pi/\sqrt{2}}; \\[8pt]
\end{cases}
\end{equation}

\begin{equation}
\label{Integral4}
\int_0^q \frac{E(t)^{6}E(t^6)^8}{E(t^3)^{10}} \ud t = q\frac{E(q)^3E(q^6)^9}{E(q^2)^3E(q^3)^9} =
\begin{cases}  
\frac{3\sqrt{2}-4}{4} & \text{if} \;\quad q=e^{-2\pi/\sqrt{6}}, \\[8pt]
\frac{(\sqrt{3}-1)^3}{16} & \text{if} \;\quad q=e^{-2\pi/\sqrt{3}}, \\[8pt]
\frac{(\sqrt{6}-2)^3}{8} & \text{if} \;\quad q=e^{-2\pi/\sqrt{2}}; \\[8pt]
\end{cases}
\end{equation}

\begin{equation}
\label{Integral5}
\int_0^q \frac{E(t^2)^{8}E(t^4)^4}{E(t)^{8}} \ud t = q\frac{E(q^2)^2E(q^8)^4}{E(q)^4E(q^4)^2} = \frac{1}{\sqrt{32}}
\quad\text{when}\quad q=e^{-\pi/\sqrt{2}}; \\[5pt]
\end{equation}

\begin{equation}
\label{Integral6}
\int_0^q \frac{E(t^3)^{10}}{E(t)^6} \ud t = q\frac{E(q^9)^3}{E(q)^3} = \frac{1}{3\sqrt{3}}\quad\text{when}\quad q=e^{-2\pi/3}. \\[5pt]
\end{equation}

\end{theorem}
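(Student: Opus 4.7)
The plan is to prove each of \eqref{Integral1}--\eqref{Integral6} in two independent steps: (a) show that the integrand on the left side is the $q$-derivative of the antiderivative $v(q)$ claimed on the right side (up to the factor of $1/q$ implicit in the measure $\ud q$ versus $\ud q/q$); (b) evaluate $v$ at each specified CM point $q = e^{-2\pi/\sqrt{N_0}}$.

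For step (a), logarithmic differentiation of $v(q)$ reduces the claim to an equality between a Lambert series and an eta quotient. After rewriting $v(q)$ using $\eta_N = q^{N/24}E(q^N)$, the leading factor of $q$ cancels and $v$ becomes a weight-$0$ eta quotient on $\Gamma_0(N)$ for some $N \in \{5,6,8,9\}$, so that $q\,\ud v/\ud q$ is a weight-$2$ holomorphic modular form on $\Gamma_0(N)$. The required identity is then an equality in the finite-dimensional space of weight-$2$ modular forms on $\Gamma_0(N)$, which I would verify by comparing Fourier coefficients up to the Sturm bound (which is at most $2$ for all the levels in play). Identity \eqref{Integral1} is special: step (a) reduces directly, after logarithmic differentiation, to the quintic Lambert-series identity
\[
1 - 5\sum_{n=1}^\infty \left(\frac{n}{5}\right)\frac{nq^n}{1-q^n} \;=\; \prod_{j=1}^\infty \frac{(1-q^j)^5}{1-q^{5j}}
\]
already quoted in the Introduction, and then $r(0)^5 = 0$ forces the constant of integration to vanish.

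For step (b), I would pass from $q$ to $\tau$ and invoke the functional equation $\eta(-1/\tau) = \sqrt{\tau/i}\,\eta(\tau)$. Because each $v$ has total eta-exponent zero, the $\sqrt{\tau/i}$ factors cancel in the relevant ratios, and the value of $v$ at $\tau_0 = i/\sqrt{N_0}$ is pinned down by a small system relating $\eta_N(\tau_0)$ to $\eta_M(\tau_0)$ across the Atkin--Lehner images of $\tau_0$. Each such system reduces to a classical evaluation of eta quotients at an imaginary quadratic point, which can be read from standard tables of Weber-type class invariants. For \eqref{Integral1}, the value $r(e^{-2\pi/\sqrt 5})^5 = \sqrt{\alpha^{10}+1} - \alpha^5$ is the classical Ramanujan evaluation of the Rogers--Ramanujan continued fraction at this CM point.

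The main obstacle will be step (b) for \eqref{Integral2}--\eqref{Integral4}, where each identity carries three distinct evaluations at $\tau = i/\sqrt{6},\,i/\sqrt{3},\,i/\sqrt{2}$; these $\tau$-values lie in different Atkin--Lehner orbits on $X_0(6)$, and so each one requires its own algebraic reduction to classical eta-ratio values. By contrast, step (a) is essentially mechanical once the correct weight-$2$ modular-form identity on $\Gamma_0(N)$ has been formulated.
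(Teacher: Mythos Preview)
Your two-step plan (differential identity, then CM evaluation) is exactly the structure the paper uses, and your handling of \eqref{Integral1} via the quintic Lambert-series identity matches the paper's remark that this case is equivalent to Ramanujan's~\eqref{outrageous}. The paper, however, does not set up step~(a) as a Sturm-bound verification in $M_2(\Gamma_0(N))$; instead it packages all six cases into the Ap\'ery-like framework of Table~\ref{Table:1}, where for each level one has a Hauptmodul-type function $x$ and a weight-$2$ eta quotient $y$ satisfying $y=q\,\tfrac{\ud}{\ud q}\log x$ (quoted from \cite[pp.~396--402]{cooperbook}), so that $\int_0^q x(t)y(t)\,\tfrac{\ud t}{t}=x(q)$ is immediate. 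For step~(b) the paper likewise cites known singular values (\cite[p.~412]{cooperbook} for level~$6$, Fine for level~$8$, \cite{coopernonic} for level~$9$, Ramanujan for level~$5$) rather than rederiving them from the $\eta$-functional equation and Weber invariants as you propose.

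So your proposal is correct and arrives at the same identities, but is more self-contained, whereas the paper's route is shorter because it leverages the existing Ap\'ery/Domb/Almkvist--Zudilin machinery and tabulated CM values. One caution on your step~(a): the Sturm bound applies only once you have checked that both $q\,\ud v/\ud q$ and the integrand eta quotient are \emph{holomorphic} at every cusp of $\Gamma_0(N)$; since $v$ is a modular function with poles at some cusps, this is not automatic and should be verified (e.g.\ via Ligozat's criteria), or else replaced by a valence-formula argument.
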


We offer some remarks before starting the proof.
\begin{enumerate}
\item
The integral~\eqref{Integral1} is equivalent to Ramanujan's example in~\eqref{outrageous} in the sense
that that applying logarithmic differentiation to each integral results in the same series identity. The appeal
of Ramanujan's form~\eqref{outrageous} is best described by Andrews~\cite{andrews} who wrote ``Ramanujan has a flair for finding the most astounding version
of any particular identity''.
\item
The integral~\eqref{Integral4} has an equivalent formula in the style of Ramanujan's~\eqref{outrageous} given by
$$
 \cfrac{q^{1/3}}{1+\cfrac{q+q^{2}}{1+\cfrac{q^{2}+q^{4}}{1+\cfrac{q^{3}+q^{6}}{1+\cdots}}}}
 = \frac12 \exp \left\{-\frac13 \int_q^1 \prod_{j=1}^\infty \frac{(1-t^j)^3(1-t^{2j})^3}{(1-t^{3j})(1-t^{6j})} \,\frac{\ud t}{t}\right\}
$$
where the left hand side is Ramanujan's cubic continued fraction. 
This particular integral does not seem to have been mentioned before in previous works
such as~\cite{madhu} and~\cite{ahlgren}. All of the necessary formulas needed to
prove this identity are well-known and can be found, for example, in~\cite[Ch. 6]{cooperbook}.
The corresponding integral for the Ramanujan--G{\"o}llnitz--Gordon continued fraction is
\begin{align*}
& \quad\cfrac{q^{1/2}}{1+q+\cfrac{q^{2}}{1+q^3+\cfrac{q^{4}}{1+q^5+\cdots}}} \qquad\qquad \\
&\qquad\qquad =(\sqrt{2}-1) \exp \left\{-\frac12 \int_q^1 \prod_{j=1}^\infty \frac{(1-t^j)^2(1-t^{2j})(1-t^{4j})^3}{(1-t^{8j})^2} \,\frac{\ud t}{t}\right\},
 \end{align*}
due to S.~Ahlgren et al.~\cite[(4.16)]{ahlgren}.
They also gave the level~12 example
\begin{align*}
\qquad& q\prod_{j=1}^\infty \frac{(1-q^{12j-11})(1-q^{12j-1})}{(1-q^{12j-7})(1-q^{12j-5})}\qquad \\
& =(2-\sqrt{3}) \exp \left\{- \int_q^1 \prod_{j=1}^\infty \frac{(1-t^j)(1-t^{3j})(1-t^{4j})^2(1-t^{6j})^2}{(1-t^{12j})^2} \,\frac{\ud t}{t}\right\}.
 \end{align*}
A continued fraction for the left hand side has been given by M. S. Mahadeva Naika et al.,~\cite{naika}.
Further properties have been studied in~\cite{ksw2009},~\cite[Ch. 12]{cooperbook}, 
\cite{cooperye12},
\cite{lee2},
\cite{lin},
\cite{naika2012} 
and~\cite{vasuki}.

\item
Integrals \eqref{Integral2} and \eqref{Integral4} in the case $q=e^{-2\pi/\sqrt{6}}$ correspond to the case $j=1$ of the
results~\cite[(6.8) and (6.9)]{aygin} of Aygin and Toh. The integral~\eqref{Integral3} and the other values of $q$ in~\eqref{Integral2}--\eqref{Integral4}
were not considered in~\cite{aygin}. As we will see in the proof, other limits of integration are possible for all of~\eqref{Integral1}--\eqref{Integral6}.
\item
The integrals in~\cite{aygin} and~\cite{zhang} that involve the positive integer~$j$ are all obtained from the $j=1$ case by
the chain rule for differentiating a power. Therefore we do not consider those generalisations in this work; we will
focus solely on the $j=1$ case to emphasise the essence of each result.
\item
The integral~\eqref{Integral5} is Fine's example~\eqref{fine2}, and the integral~\eqref{Integral6} is from~\cite{coopernonic}.
\end{enumerate}

\begin{proof}[Proof of Theorem~\ref{T:1}]
The proofs of~\eqref{Integral1}--\eqref{Integral6} are procedurally the same and consist of extracting information
from Table~\ref{Table:1}. Each row of the table consists of three functions of $q$ denoted by $x$, $w$ and $y$, where
$w$ and $y$ are given in terms of $x$ by the identities
$$
w=\frac{x}{1-\alpha x - \gamma x^2} \quad\text{and}\quad y=q\frac{\ud}{\ud q} \log x
$$ 
where $\alpha$ and $\gamma$ are as in the table. There is also a parameter $\beta$ in Table~\ref{Table:1} which we will explain next.
The function $y$ may be expanded in powers of $w$ by the formula
$$
y=\sum_{n=0}^\infty s(n) w^n
$$
where the coefficients $s(n)$ are given by the respective binomial sums in the table, and satisfy the three-term recurrence relation
$$
(n+1)^3s(n+1)=-(2n+1)(\alpha n^2+\alpha n+\alpha-2\beta)s(n)
 -(\alpha^2+4\gamma) n^3 s(n-1)
$$
and initial condition $s(0)=1$. The numbers $s(n)$ in the table corresponding to 6(A), 6(B) and 6(C) are called Ap{\'e}ry numbers, Domb numbers, and
Almkvist--Zudilin numbers, respectively. Not all of this information is needed in the proof, but it is included for completeness because it is useful to
know it is part of a bigger theory. For further details, see~\cite[pp. 396--402]{cooperbook}.

With this information, let us consider the integral~\eqref{Integral1}. From the row of the table corresponding to level~5, we have
$$
y=q\frac{\ud}{\ud q} \log x \quad\text{where}\quad x=r^5(q) \quad\text{and}\quad y=\dfrac{\eta_1^5}{\eta_5}
$$
where $r(q)$ is as in~\eqref{rrcf} and $\eta_1$ and $\eta_5$ are given by~\eqref{etaN}. Integration of the differential equation, and noting that $x(0) = r^5(0) = 0$, gives
$$
\int_0^q x(t) y(t) \, \frac{\ud t}{t} = x(q)
$$
and this proves the first part of~\eqref{Integral1}. The value of $r(q)$ when $q=e^{2\pi/\sqrt{5}}$ was known to Ramanujan, e.g., see~\cite[p. 307]{cooperbook},
and substitution of this value of $q$ completes the proof of~\eqref{Integral1}.

Many other evaluations of $r(q)$ are known and so different values of $q$ can be used in the definite integral. The value $q=e^{2\pi/\sqrt{5}}$ is the simplest interesting case.

The other integrals~\eqref{Integral2}--\eqref{Integral6} are extracted from the other rows of Table~\ref{Table:1} by the same process. The specific values of~\eqref{Integral2}--\eqref{Integral4} 
are from\footnote{The value of $r_b(e^{-2\pi/\sqrt{2}})$ in \cite[p. 412]{cooperbook} contains a misprint and should be $(\sqrt{3}-\sqrt{2})/12$. Remember
to use $x_a$, $x_b$ and $x_c$ as given by~\cite[(6.5)]{cooperbook}, i.e., $x_a=r_a$,  $x_b=r_b^2$ and $x_c=r_c^3$.}~\cite[p. 412]{cooperbook}.
We have already mentioned that the integral~\eqref{Integral5} is due to Fine~\cite{fine}, and~\eqref{Integral6} is from~\cite{coopernonic}.

\end{proof}

\begin{table}
\caption{Data for Theorem~\ref{T:1}}
{\renewcommand{\arraystretch}{2.9}r
\centering
{\resizebox{13cm}{!}{
\begin{tabular}{|c|c|c|c|c|c|}
\hline
level & $x$ & $w$ & $y$ & $(\alpha,\beta,\gamma)$ & $s(n)$ \\
\hline \hline
$5$ 
& $r(q)^5$ 
& $\displaystyle{\frac{\eta_5^{6}}{\eta_1^{6}}}$
& $\dfrac{\eta_1^5}{\eta_5}$ 
&$ (11,3,1)$
& $\displaystyle{\sum_{j} (-1)^{j+n}{n \choose j}^{3} {4n-5j \choose 3n}}$
\\
\hline
$6$ (A)
& $\dfrac{\eta_2\eta_6^5}{\eta_1^5\eta_3}$ 
& $\displaystyle{\frac{\eta_1^{12}\eta_6^{12}}{\eta_2^{12}\eta_3^{12}}}$
& $\dfrac{\eta_2^7\eta_3^7}{\eta_1^5\eta_6^5}$ 
& $(-17,-6,-72)$
&  $\displaystyle{\sum_{j} {n \choose j}^{2}{n+j \choose j}^{2}}$
\\
\hline
$6$ (B)
& $\dfrac{\eta_1^4\eta_6^8}{\eta_2^8\eta_3^4}$ 
& $\displaystyle{\frac{\eta_2^{6}\eta_6^{6}}{\eta_1^{6}\eta_3^{6}}}$
& $\dfrac{\eta_1^4\eta_3^4}{\eta_2^2\eta_6^2}$ 
& $(10,3,-9)$
& $\displaystyle{(-1)^{n}\sum_{j} {n \choose j}^{2}{2j \choose j}{2n-2j \choose n-j}}$
\\
\hline
$6$ (C)
& $\dfrac{\eta_1^3\eta_6^9}{\eta_2^3\eta_3^9}$ 
& $\displaystyle{\frac{\eta_3^{4}\eta_6^{4}}{\eta_1^{4}\eta_2^{4}}}$
& $\dfrac{\eta_1^3\eta_2^3}{\eta_3\eta_6}$ 
& $(7,2,8)$ 
& $\displaystyle{\sum_{j} (-3)^{n-3j}{n+j \choose j,j,j,j,n-3j}}$ 
\\
\hline
$8$ 
& $\dfrac{\eta_2^2\eta_8^4}{\eta_1^4\eta_4^2}$ 
& $\displaystyle{\frac{\eta_1^{8}\eta_8^{8}}{\eta_2^{8}\eta_4^{8}}}$
& $\dfrac{\eta_2^6\eta_4^6}{\eta_1^4\eta_8^4}$
& $(12,4,-32)$
& $\displaystyle{\sum_{j} {n \choose j}^{2}{2j \choose n}^{2}}$
\\
\hline
$9$ 
& $\dfrac{\eta_9^3}{\eta_1^3}$  
& $\displaystyle{\frac{\eta_1^{6}\eta_9^{6}}{\eta_3^{12}}}$
& $\dfrac{\eta_3^{10}}{\eta_1^3\eta_9^3}$ 
& $(-9,-3,-27)$
& $\displaystyle{\sum_{j,\ell} {n \choose j}^{2}{n \choose \ell}{j \choose \ell}{j+\ell \choose n}}$
\\
\hline
\end{tabular}}}}
\label{Table:1}
\end{table}

\section{Further examples}
\label{S:3}
Additional examples can be created using data from~\cite[pp. 402--403]{cooperbook} and~\cite[Table 3]{4term}, summarised in
Table~\ref{Table:2}. In that table, the key relations are
\begin{equation}
\label{dwdq}
q\frac{\ud}{\ud q} \log w = Z 
\end{equation}
and
$$
Z=\sum_{n=0}^\infty T(n)X^n.
$$
For levels 5, 6, 8 and 9, the function $w$ in Table~\ref{Table:2} is identical to the
corresponding function $w$ in Table~\ref{Table:1}. 
Moreover for those levels, we have the relation, e.g., see \cite[(6.72)]{cooperbook},
$$
X = \frac{w}{1+2\alpha w + (\alpha^2+4\gamma)w^2}
$$
where $\alpha$ and $\gamma$ are as for Table~\ref{Table:1}.
Details of proofs of the identities for levels $1,\ldots,6$, 8 and 9 in Table~\ref{Table:2} are provided in~\cite[pp. 400--403]{cooperbook}.
Proofs for level 7 are in~\cite[pp. 449, 458]{cooperbook}. The function $w$ in Table~\ref{Table:2} for level~10 is different from the
corresponding function $w$ in~\cite{4term}. The function $w$ in Table~\ref{Table:2} has been chosen so that property~\eqref{dwdq} holds; see~\cite[p. 538]{cooperbook}.

In principle, integral evaluations can be written for each row of data in Table~\ref{Table:2}, by integrating~\eqref{dwdq}.
As  example, for level 7, the result is
$$
\int_0^q \frac{E(t^7)^6}{E(t)^2} \left(\frac{E(t)^4}{E(t^7)^4}+13t+49t^2\frac{E(t^7)^4}{E(t)^4}\right)^{2/3} \, \ud t = q\frac{E(q^7)^4}{E(q)^4} 
$$
and both sides are equal to $1/7$ when $q=e^{-2\pi/\sqrt{7}}$. The analogue of this result corresponding to Ramanujan's~\eqref{outrageous}
has been given in~\cite[(4.7)]{ahlgren}.

We also mention the example corresponding to level 4. By~\cite[Theorem 4.1]{4term}, another formula for $Z$ in Table~\ref{Table:2} is 
$$
Z = \frac{\eta_2^{20}}{\eta_1^8\eta_4^8}.
$$
Hence, integration of \eqref{dwdq} with data from Table~\ref{Table:2} for level 4 leads to
$$
\int_0^{q} \frac{E(t^2)^{20}}{E(t)^{16}} \ud t = q\,\frac{E(q^4)^8}{E(q)^8}
$$
and this is Fine's example in~\eqref{fine0} and~\eqref{fine1}.

In the next section we will proceed in a completely different direction and show how to find Ramanujan--Fine integrals involving
eta quotients for level 10.

\begin{table}
\caption{Data for $\displaystyle{Z=\sum_{n=0}^\infty T(n)X^n}. \qquad$
The notation is: \\
$\displaystyle{\scalemath{.9}{Q=1+240\sum_{j=1}^\infty \frac{j^3 q^{j}}{1-q^j},}}\quad$ 
$\displaystyle{\scalemath{.9}{R=1-504\sum_{j=1}^\infty \frac{j^5q^{j}}{1-q^j},}}$}
{\renewcommand{\arraystretch}{2.9}
\centering
{\resizebox{13cm}{!}{
\begin{tabular}{|c|c|c|c|c|}
\hline
level & $w$ & $X$ & $Z$ & $T(n)$ \\
\hline \hline
$1$ 
& $\frac{1}{432}\left(\frac{Q^{3/2}-R}{Q^{3/2}+R}\right)$ 
& $\displaystyle{\frac{w}{(1+432w)^2}}$ 
& $\displaystyle{\frac{\eta_1^{4}}{X^{1/6}}}$
& $\displaystyle{{6n \choose 3n}{3n \choose n}{2n \choose n}}$ 
 \\
\hline
$2$ 
& $\displaystyle{\frac{\eta_2^{24}}{\eta_1^{24}}}$
& $\displaystyle{\frac{w}{(1+64w)^2}}$ 
& $\displaystyle{\frac{\eta_1^{2}\eta_2^2}{X^{1/4}}}$ 
& $\displaystyle{{4n \choose 2n}{2n \choose n}^2}$ 
\\
\hline
$3$ 
& $\displaystyle{\frac{\eta_3^{12}}{\eta_1^{12}}}$
& $\displaystyle{\frac{w}{(1+27w)^2}}$ 
& $\displaystyle{\frac{\eta_1^{2}\eta_3^2}{X^{1/3}}}$ 
& $\displaystyle{{3n \choose n}{2n \choose n}^2}$ 
\\
\hline
$4$ 
& $\displaystyle{\frac{\eta_4^{8}}{\eta_1^{8}}}$
& $\displaystyle{\frac{w}{(1+16w)^2}}$ 
& $\displaystyle{\frac{\eta_1^2\eta_4^2}{X^{5/12}}}$ 
& $\displaystyle{{2n \choose n}^3}$ 
\\
\hline
$5$ 
& $\displaystyle{\frac{\eta_5^{6}}{\eta_1^{6}}}$
& $\displaystyle{\frac{w}{1+22w+125w^2}}$ 
& $\displaystyle{\frac{\eta_1^{2}\eta_5^2}{X^{1/2}}}$ 
& $\displaystyle{{2n \choose n}\sum_{j} {n \choose j}^{2}{n+j \choose j}}$ 
\\
\hline
$6$ (A)
& $\displaystyle{\frac{\eta_1^{12}\eta_6^{12}}{\eta_2^{12}\eta_3^{12}}}$
& $\displaystyle{\frac{w}{1-34w+w^2}}$ 
& $\displaystyle{\frac{\eta_1\eta_2\eta_3\eta_6}{X^{1/2}}}$ 
&  $\displaystyle{{2n \choose n}\sum_{j,\ell}(-8)^{n-j}{n \choose j}{j \choose \ell}^{3}}$ 
\\
\hline
$6$ (B)
& $\displaystyle{\frac{\eta_2^{6}\eta_6^{6}}{\eta_1^{6}\eta_3^{6}}}$
& $\displaystyle{\frac{w}{1+20w+64w^2}}$ 
& $\displaystyle{\frac{\eta_1\eta_2\eta_3\eta_6}{X^{1/2}}}$ 
& $\displaystyle{{2n \choose n}\sum_{j} {n \choose j}^{2}{2j \choose j}}$
\\
\hline
$6$ (C)
& $\displaystyle{\frac{\eta_3^{4}\eta_6^{4}}{\eta_1^{4}\eta_2^{4}}}$
& $\displaystyle{\frac{w}{1+14w+81w^2}}$ 
& $\displaystyle{\frac{\eta_1\eta_2\eta_3\eta_6}{X^{1/2}}}$ 
& $\displaystyle{{2n \choose n}\sum_{j} {n \choose j}^{3}}$ 
\\
\hline
$7$ 
& $\displaystyle{\frac{\eta_7^{4}}{\eta_1^{4}}}$
& $\displaystyle{\frac{w}{1+13w+49w^2}}$ 
& $\displaystyle{\frac{\eta_1^{2}\eta_7^2}{X^{2/3}}}$ 
& $\displaystyle{\sum_{j} {n \choose j}^{2} {2j \choose n} {n+j \choose j} }$ 
\\
\hline
$8$ 
& $\displaystyle{\frac{\eta_1^{8}\eta_8^{8}}{\eta_2^{8}\eta_4^{8}}}$
& $\displaystyle{\frac{1}{1-24w+16w^2}}$ 
& $\displaystyle{\frac{\eta_2^{2}\eta_4^2}{X^{1/2}}}$ 
& $\;\;\displaystyle{{2n \choose n}(-1)^{n}\sum_j {n \choose j}{2j \choose j}{2n-2j \choose n-j}}\;\;$ 
\\
\hline
$9$ 
& $\displaystyle{\frac{\eta_1^{6}\eta_9^{6}}{\eta_3^{12}}}$
& $\displaystyle{\frac{w}{1-18w-27w^2}}$ 
& $\displaystyle{\frac{\eta_3^4}{X^{1/2}}}$ 
& $\displaystyle{{2n \choose n}{\sum_{j}} (-3)^{n-3j} {n \choose j}{n-j \choose j}{n-2j \choose j}}$ 
\\
\hline
$10$ 
& $\displaystyle{\frac{\eta_5^{2}\eta_{10}^{2}}{\eta_1^{2}\eta_2^{2}}}$
& $\displaystyle{\frac{w}{1+6w+25w^2}}$ 
& $\displaystyle{\frac{\eta_1\eta_2\eta_5\eta_{10}}{X^{3/4}}}$ 
& $\displaystyle{\sum_j {n \choose j}^4}$  \\
\hline
\end{tabular}}}}
\vspace{2in}
\label{Table:2}
\end{table}

\section{A computer search for eta quotients: initial results}
\label{S:4}
In this section we will describe how Ramanujan--Fine integrals can be found for level~10.
Taking as prototypes Fine's integrals and the integrals~\eqref{Integral2}--\eqref{Integral6} suggests
looking for pairs of functions $u(q)$ and $v(q)$ with
$$
\int u(q) \frac{\ud q}{q} = v(q), \quad\text{that is,}\quad u(q)= q\frac{\ud}{\ud q} v(q).
$$
In the above mentioned examples, both $u(q)$ and $v(q)$ are eta quotients.
We investigate the more general situation when $u(q)$ is an eta quotient and $v(q)$
is a power series with integer coefficients.
The eta quotients $u(q)$ in Fine's examples are all of the form
$$
\prod_{d|N} \eta_d^{e_d}
$$
where $N$ is a fixed positive integer, $d$ ranges over the divisors of $N$, and the exponents $e_d$ are integers satisfying
$$
\sum_d e_d = 4 \quad\text{and}\quad \sum_d d e_d = 24.
$$
With these examples in mind, we conducted a computer search for exponents $e_1$, $e_2$, $e_5$ and $e_{10}$ that satisfy:
\begin{enumerate}
\item $e_1+e_2+e_5+e_{10}= 4$;
\item $e_1+2e_2+5e_5+10e_{10} \equiv 0\pmod{24}$; and
\item the eta quotient $u(q) = \eta_1^{e_1}\eta_2^{e_2}\eta_5^{e_5}\eta_{10}^{e_{10}}$ is the derivative of a series 
$$
v(q) = \sum_{j=j_0}^\infty c(j)q^j \quad \text{for some $j_0 \in \mathbb{Z}$}
$$
where the coefficients $c(j)$ are all integers.
\end{enumerate}
We quickly found the examples
$$
(e_1,e_2,e_5,e_{10}) = (8,-7,0,3)\quad\text{and}\quad (0,3,8,-7)
$$
corresponding to
\begin{align}
\frac{\eta_1^8\eta_{10}^3}{\eta_2^7} &=
q\frac{\ud}{\ud q}\left(q-4q^2 + 9q^3 - 14q^4 + 21q^5-36q^6+58q^7-84q^8+\cdots\right) \label{e10a}
\intertext{and}
\frac{\eta_2^3\eta_{5}^8}{\eta_{10}^7} &=
-q\frac{\ud}{\ud q}\left(\frac{1}{q}+3q+2q^4 -q^5-4q^6-3q^9+4q^{10}+8q^{11}+\cdots\right). \label{e10b}
\end{align}
The involutions obtained by replacing $(e_1,e_2,e_5,e_{10})$ by $(e_{10},e_5,e_2,e_{1})$ in these examples were not found by the search, because
\begin{align}
\frac{\eta_1^3\eta_{10}^8}{\eta_5^7} &=
q\frac{\ud}{\ud q}\left(\frac12q^2-q^3+q^5+q^7-\frac{7}{2}q^8+\frac72q^{10}+3q^{12}-10q^{13}+\cdots\right) \label{e10c}
\intertext{and}
\frac{\eta_2^8\eta_{5}^3}{\eta_{1}^7} &=
q\frac{\ud}{\ud q}\left(q+\frac72q^2+9q^3+21q^4+46q^5+94q^6+183q^7+\frac{687}{2}q^8+\cdots\right) \label{e10d}
\end{align}
and the coefficients are not all integers.
We will show in Corollary~\ref{integercoeffs} that the coefficients in parentheses on the right hand sides of~\eqref{e10a} and~\eqref{e10b} are all integers,
and two times the corresponding coefficients in~\eqref{e10c} and~\eqref{e10d} are also integers.

In order to analyse the eta quotients in~\eqref{e10a}--\eqref{e10d} we recall Ramanujan's
parameter~$k$ defined by
\begin{equation}
\label{kdef}
k(q) = r(q) r^2(q^2) = q\prod_{j=1}^\infty \frac{(1-q^{10j-9})(1-q^{10j-8})(1-q^{10j-2})(1-q^{10j-1})}{(1-q^{10j-7})(1-q^{10j-6})(1-q^{10j-4})(1-q^{10j-3})}.
\end{equation}
By~\cite[Theorems 10.7 and 10.8]{cooperbook} we have
\begin{align}
\eta_1^{24}&= y_{10}^6\;\frac{k(1-4k-k^2)^4}{(1-k^2)^4(1+k-k^2)}, \label{rp1} \\
\eta_2^{24}&= y_{10}^6\;\frac{k^2(1+k-k^2)^4}{(1-k^2)^5(1-4k-k^2)}, \label{rp2} \\
\eta_5^{24}&= y_{10}^6\;\frac{k^5(1-k^2)^4}{(1+k-k^2)^5(1-4k-k^2)^4}, \label{rp3} \\
\eta_{10}^{24}&= y_{10}^6\;\frac{k^{10}}{(1-k^2)(1+k-k^2)^4(1-4k-k^2)^5} \label{rp4}
\end{align}
where
$$
y_{10} = q\frac{\ud}{\ud q} \log k =\frac{\eta_1\eta_2^2\eta_5^3}{\eta_{10}^2}.
$$
Therefore, the eta quotients~\eqref{e10a}--\eqref{e10d} can be parameterised in terms of $k$ as
\begin{align*}
\frac{\eta_1^8\eta_{10}^3}{\eta_2^7} 
&= \frac{k(1-4k-k^2)}{(1+k-k^2)^2} \,q\frac{\ud}{\ud q} \log k, \\
\frac{\eta_2^3\eta_{5}^8}{\eta_{10}^7} 
&= \frac{1-k^2}{k} \,q\frac{\ud}{\ud q} \log k, \\
\frac{\eta_1^3\eta_{10}^8}{\eta_5^7} 
&= \frac{k^2}{(1-k^2)^2} \,q\frac{\ud}{\ud q} \log k, \\
\frac{\eta_2^8\eta_{5}^3}{\eta_1^7} 
&= \frac{k(1+k-k^2)}{(1-4k-k^2)^2} \,q\frac{\ud}{\ud q} \log k.
\end{align*}
Hence, by the chain rule and simple calculus we obtain
\begin{align}
\frac{\eta_1^8\eta_{10}^3}{\eta_2^7} 
&= -q\frac{\ud}{\ud q} \left(\frac{1+k^2}{1+k-k^2}\right), \label{10a1}\\
\frac{\eta_2^3\eta_{5}^8}{\eta_{10}^7} 
&= -q\frac{\ud}{\ud q} \left(\frac{1+k^2}{k}\right), \label{10a2}\\
\frac{\eta_1^3\eta_{10}^8}{\eta_5^7} 
&= \frac14\,q\frac{\ud}{\ud q} \left(\frac{1+k^2}{1-k^2}\right) \label{10a3} \intertext{and}
\frac{\eta_2^8\eta_{5}^3}{\eta_1^7} 
&= \frac14\,q\frac{\ud}{\ud q} \left(\frac{1+k^2}{1-4k-k^2}\right).\label{10a4}
\end{align}
Any of~\eqref{10a1}, \eqref{10a3} or~\eqref{10a4} may be integrated over the interval $0<q<q_0$ to produce a definite integral that can be evaluated exactly
provided the value of $k(q_0)$ can be determined. Note that~\eqref{10a2} is not integrable at $q=0$. In this way we obtain the following definite integrals.
\begin{theorem}
\label{10point2}
The following integral evaluations hold:
$$
\int_0^{e^{-2\pi/\sqrt{10}}} \frac{E(q)^8E(q^{10})^3}{E(q^2)^7} \ud q = 1-2\,\sqrt{\frac{10-4\sqrt{5}}{5}},
$$
$$
\int_0^{e^{-2\pi/\sqrt{10}}} \frac{qE(q)^3E(q^{10})^8}{E(q^5)^7} \ud q = \frac14\left(\sqrt{10-4\sqrt{5}}-1\right),
$$
and
$$
\int_0^{e^{-2\pi/\sqrt{10}}} \frac{E(q^2)^8E(q^5)^3}{E(q)^7} \ud q = \frac14\left(\sqrt{\frac{10+4\sqrt{5}}{5}}-1\right).
$$
\end{theorem}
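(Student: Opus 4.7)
The plan is to reduce each of the three integrals to a direct application of the fundamental theorem of calculus, using the parameterisations \eqref{10a1}, \eqref{10a3} and \eqref{10a4} of the three eta quotients in terms of Ramanujan's parameter $k(q)$ that have just been established.

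First I would rewrite each integrand in the form (eta quotient)$\,\ud q/q$. From $\eta_N = q^{N/24}E(q^N)$ in \eqref{etaN}, the exponent sums satisfy $(8+30-14)/24 = 1$, $(3+80-35)/24 = 2$ and $(16+15-7)/24 = 1$, so
$$
\frac{E(q)^8 E(q^{10})^3}{E(q^2)^7}\,\ud q = \frac{\eta_1^8\eta_{10}^3}{\eta_2^7}\,\frac{\ud q}{q},\qquad
\frac{qE(q)^3 E(q^{10})^8}{E(q^5)^7}\,\ud q = \frac{\eta_1^3\eta_{10}^8}{\eta_5^7}\,\frac{\ud q}{q},
$$
and analogously for the third integrand. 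Substituting \eqref{10a1}, \eqref{10a3} and \eqref{10a4}, the factor of $q$ cancels against $\ud q/q$ and each integrand becomes an exact differential of a rational function of $k$.

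Second, I would apply the fundamental theorem of calculus on the interval $[0,q_0]$ with $q_0 = e^{-2\pi/\sqrt{10}}$. The product representation \eqref{kdef} gives $k(q) = q + O(q^2)$ as $q\to 0^+$, so each of the rational functions $(1+k^2)/(1+k-k^2)$, $(1+k^2)/(1-k^2)$ and $(1+k^2)/(1-4k-k^2)$ equals $1$ at the lower limit and is regular there; this is precisely why the remaining relation \eqref{10a2}, whose antiderivative $(1+k^2)/k$ has a pole at $k=0$, is excluded from this theorem. Writing $k_0 := k(q_0)$, the three integrals evaluate to
$$
1 - \frac{1+k_0^2}{1+k_0-k_0^2},\qquad \frac14\left(\frac{1+k_0^2}{1-k_0^2} - 1\right),\qquad \frac14\left(\frac{1+k_0^2}{1-4k_0-k_0^2} - 1\right).
$$

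The main obstacle is therefore the computation of $k_0$, which is a genuine class-invariant problem deferred to Section~\ref{S:7} and relies on the modular transformation of $k$ under $\tau\mapsto -1/\tau$ together with an appropriate modular equation. Once $k_0$ is in hand, matching the three rational functions above to the stated target values amounts to verifying
$$
\frac{1+k_0^2}{1+k_0-k_0^2} = 2\sqrt{\frac{10-4\sqrt{5}}{5}},\qquad \frac{1+k_0^2}{1-k_0^2} = \sqrt{10-4\sqrt{5}},\qquad \frac{1+k_0^2}{1-4k_0-k_0^2} = \sqrt{\frac{10+4\sqrt{5}}{5}},
$$
each of which is a routine algebraic simplification from the explicit value of $k_0$.
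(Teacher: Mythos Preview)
Your proposal is correct and follows essentially the same route as the paper: integrate \eqref{10a1}, \eqref{10a3} and \eqref{10a4} over $[0,e^{-2\pi/\sqrt{10}}]$, note that $k(0)=0$ so the lower endpoint contributes~$1$, and then substitute the value $k(e^{-2\pi/\sqrt{10}})=\sqrt{10+4\sqrt{5}}-2-\sqrt{5}$ (equivalently $1/k-k=4+2\sqrt{5}$) deferred to the appendix. The only difference is cosmetic: the paper works directly with $u=1/k-k$, which makes the three algebraic verifications you list immediate since $(1+k^2)/(1\!+\!k\!-\!k^2)=\sqrt{u^2+4}/(u+1)$, $(1+k^2)/(1-k^2)=\sqrt{u^2+4}/u$ and $(1+k^2)/(1\!-\!4k\!-\!k^2)=\sqrt{u^2+4}/(u-4)$.
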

\begin{proof}
These follow by integrating~\eqref{10a1}, \eqref{10a3} and~\eqref{10a4}, respectively, and using the values
$$
\left[ \frac{1}{k}-k \right]_{q=e^{-2\pi/\sqrt{10}}} \; = 4+2\sqrt{5}
$$
and hence 
$$
k(e^{-2\pi/\sqrt{10}}) = \sqrt{10+4\sqrt{5}} - 2 - \sqrt{5}.
$$
We defer a proof of these evaluations to the appendix; see ~\eqref{u} and~\eqref{keval}.
\end{proof}

\noindent
We are now ready to analyse the integrality properties of the coefficients \mbox{in~\eqref{e10a}--\eqref{e10d}.}
\begin{corollary}
\label{integercoeffs}
The coefficients $a(n)$, $b(n)$, $c(n)$ and $d(n)$ defined by
\begin{align*}
\frac{\eta_1^8\eta_{10}^3}{\eta_2^7} &= q\frac{\ud}{\ud q} \left(\sum_{n=1}^\infty a(n)q^n\right) = q\frac{\ud}{\ud q} \left( q-4q^2+9q^3-14q^4+\cdots\right),\\
\frac{\eta_2^3\eta_{5}^8}{\eta_{10}^7} &= q\frac{\ud}{\ud q} \left(\sum_{n=-1}^\infty b(n)q^n\right) = q\frac{\ud}{\ud q} \left( \frac{-1}{q}-3q-2q^4+q^5+\cdots\right),\\
2\frac{\eta_1^3\eta_{10}^8}{\eta_5^7} &= q\frac{\ud}{\ud q} \left(\sum_{n=2}^\infty c(n)q^n\right) = q\frac{\ud}{\ud q} \left( q^2-2q^3+2q^5+2q^7-7q^8+\cdots\right),\\
2\frac{\eta_2^8\eta_{5}^3}{\eta_{1}^7} &= q\frac{\ud}{\ud q} \left(\sum_{n=1}^\infty d(n)q^n\right) = q\frac{\ud}{\ud q} \left( 2q+7q^2+18q^3+42q^4+\cdots\right),\\
\end{align*}
are all integers.
\end{corollary}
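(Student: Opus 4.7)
The plan leverages the $k$-parameterizations \eqref{10a1}--\eqref{10a4} already established in this section, together with the elementary observation that $k(q)$ itself has an integer power series expansion. First I would verify from \eqref{kdef} that $k(q) \in q\mathbb{Z}[[q]]$ with $k(q) = q + O(q^2)$: every factor $1-q^m$ in the numerator of \eqref{kdef} lies in $\mathbb{Z}[q]$, every denominator factor $1-q^m$ (with $m\geq 3$) has reciprocal $1+q^m+q^{2m}+\cdots \in \mathbb{Z}[[q]]$, and the prefactor $q$ fixes the leading term. As a by-product, $k(q)$ is invertible in $\mathbb{Z}((q))$ with $1/k(q) \in q^{-1} + \mathbb{Z}[[q]]$.

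Next I would integrate each of \eqref{10a1}--\eqref{10a4} and choose the constants of integration so that the resulting series match the prescribed valuations in the statement of the corollary. Using the algebraic rearrangements
$$1 - \frac{1+k^2}{1+k-k^2} = \frac{k-2k^2}{1+k-k^2}, \qquad \frac{1+k^2}{1-k^2} = 1 + \frac{2k^2}{1-k^2}, \qquad \frac{1+k^2}{1-4k-k^2} = -1 + \frac{2-4k}{1-4k-k^2},$$
this yields the closed forms
$$\sum_{n} a(n)q^n = \frac{k-2k^2}{1+k-k^2}, \quad \sum_{n} b(n)q^n = -\frac{1+k^2}{k}, \quad \sum_{n} c(n)q^n = \frac{k^2}{1-k^2}, \quad \sum_{n} d(n)q^n = \frac{2k+k^2}{1-4k-k^2}.$$
The prefactor $2$ on the left of the $c$ and $d$ identities combines with the $\tfrac14$ in \eqref{10a3} and \eqref{10a4} to give exactly the $\tfrac12$ needed to cancel the numerator $2$'s produced by the partial-fraction rearrangements above and so leave integer-coefficient rational functions of $k$.

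The conclusion is then immediate. Each right-hand side above lies in $\mathbb{Z}((k))$: the denominators $1+k-k^2$, $1-k^2$, and $1-4k-k^2$ all have constant term~$1$ and so are invertible in $\mathbb{Z}[[k]]$, and the only pole encountered is the simple pole $1/k$ in the $b$-case, which is harmless by the by-product noted in Step~1. Substituting $k = k(q)$ then preserves integrality of the (Laurent) coefficients, so $a(n),b(n),c(n),d(n)\in\mathbb{Z}$. There is no serious obstacle — once the $k$-parameterization is in hand the argument is formal — but care is required in the bookkeeping of constants in the second step, since a careless handling of the $\tfrac14$-$2$-$\tfrac12$ interaction would produce half-integer rather than integer coefficients, and it is precisely to avoid this that the corollary multiplies the eta quotients in~\eqref{e10c} and~\eqref{e10d} by~$2$.
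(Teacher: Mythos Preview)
Your proposal is correct and follows essentially the same approach as the paper's own proof: both use the identities \eqref{10a1}--\eqref{10a4}, rewrite the rational functions of $k$ so that the numerators and denominators have integer coefficients (absorbing the factor $2$ in the $c$- and $d$-cases), and then invoke $k(q)\in q\,\mathbb{Z}[[q]]$ to conclude. The paper only writes out the $d(n)$ case explicitly (arriving at the same expression $\frac{2k+k^2}{1-4k-k^2}$ you give) and declares the others similar, whereas you spell out all four; note that your third rearrangement $\frac{1+k^2}{1-4k-k^2}=-1+\frac{2-4k}{1-4k-k^2}$ leads to $\frac{1-2k}{1-4k-k^2}$ after halving, which differs from your stated $\frac{2k+k^2}{1-4k-k^2}$ by the integer constant~$1$---harmless for the conclusion, but the paper's rearrangement $1+\frac{4k+2k^2}{1-4k-k^2}$ is slightly more direct.
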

\begin{proof}
We only prove the last property, because the other proofs are similar. By~\eqref{10a4} we have
\begin{align*}
2\frac{\eta_2^8\eta_{5}^3}{\eta_{1}^7}  
&=\frac12\,q\frac{\ud}{\ud q} \left(\frac{1+k^2}{1-4k-k^2}\right) \\
&=\frac12\,q\frac{\ud}{\ud q} \left(1+\frac{4k+2k^2}{1-4k-k^2}\right) \\
&=q\frac{\ud}{\ud q} \left(\frac{2k+k^2}{1-4k-k^2}\right).
\end{align*}
By the definition of $k$ in~\eqref{kdef} it follows that
$$
k(q) = \sum_{n=1}^\infty t(n)q^n
$$
where the coefficients $t(n)$ are all integers. Hence, the coefficients in the
expansion of $\frac{2k+k^2}{1-4k-k^2}$ are all integers. In other words, the coefficients
$d(n)$ are all integers.
\end{proof}

\section{A computer search for eta quotients: further results}
\label{S:5}
Based on the findings in the previous section, we can extend the search by considering
\begin{enumerate}
\item $e_1+e_2+e_5+e_{10}= 4$;
\item $e_1+2e_2+5e_5+10e_{10} \equiv 0\pmod{24}$; and
\item an integer multiple of the eta quotient $u(q) = \eta_1^{e_1}\eta_2^{e_2}\eta_5^{e_5}\eta_{10}^{e_{10}}$ is the derivative of a series, i.e., 
$$
b\, \eta_1^{e_1}\eta_2^{e_2}\eta_5^{e_5}\eta_{10}^{e_{10}} = \sum_{j=j_0}^\infty jc(j)q^j \quad \text{for some $j_0 \in \mathbb{Z}$}
$$
for some integer $b$ and where the coefficients $c(j)$ are all integers.
\end{enumerate}
We typically take $b$ to be a product of small primes, e.g., $b=2\times 3\times 5 \times 7=210$.
By applying the computer search and reasoning as in the previous section, we end up with the data in Tables~\ref{Table:8} and~\ref{Table:9}.
The notation for those tables is:
$$
\eta_1^{e_1}\eta_2^{e_2}\eta_5^{e_5}\eta_{10}^{e_{10}} = f(k) \times q\frac{\ud}{\ud q} \log k = \frac{\eta_1\eta_2^2\eta_5^3}{\eta_{10}^2}\,f(k)
$$
where
$$
f(k)= \eta_1^{e_1-1}\eta_2^{e_2-2}\eta_5^{e_5-3}\eta_{10}^{e_{10}+2} = k^{a_0}(1-k^2)^{a_1}(1+k-k^2)^{a_2}(1-4k-k^2)^{a_3} 
$$
and where the exponents $a_0$, $a_1$, $a_2$ and $a_3$ are given by
\begin{align*}
a_0&= \frac{1}{24}\left(e_1+2e_2+5e_5+10e_{10}\right), \\
a_1 &= \frac{1}{24}\left(-4e_1-5e_2+4e_5-e_{10}\right), \\
a_2 &= \frac{1}{24}\left(-e_1+4e_2-5e_5-4e_{10}\right), \\
a_3 &= \frac{1}{24}\left(4e_1-e_2-4e_5-5e_{10}\right).
\end{align*}
As a check we have
$$
a_0+a_1+a_2+a_3=0 \quad\text{and}\quad e_1+e_2+e_5+e_{10}=4.
$$

\begin{table}[ht]
\caption{Data for Conjecture~\ref{Conj:1} and Theorem~\ref{T:5}. }
{\renewcommand{\arraystretch}{2.4}
\centering
\begin{tabular}{|c||c|c|c|c||c|c|c|c||c|}
\hline
label & $e_1$ & $e_2$ & $e_5$ & $e_{10}$ & $a_0$ & $a_1$ & $a_2$ & $a_3$ &
 $\displaystyle{\int \eta_1^{e_1}\eta_2^{e_2}\eta_5^{e_5}\eta_{10}^{e_{10}} \, \frac{\ud q}{q} = \int f(k) \frac{\ud k}{k}} = g(k)$ \\
\hline \hline
1.0 &8 & -7 & 0 & 3 & 1 & 0 & -2 & 1 &$\displaystyle{-\left(\frac{1+k^2}{1+k-k^2}\right)}$   \\  \hline
2.0 &0 & 3 & 8 & -7 & -1 & 1 & 0 & 0 & $\displaystyle{-\left(\frac{1+k^2}{k}\right)}$ \\ \hline 
3.0 & 3 & 0 & -7 & 8 & 2 & -2 & 0 & 0 & $\displaystyle{\frac14\left(\frac{1+k^2}{1-k^2}\right)}$  \\ \hline
4.0 &-7 & 8 & 3 & 0 & 1 & 0 & 1 & -2 & $\displaystyle{\frac14\left(\frac{1+k^2}{1-4k-k^2}\right)}$  \\ \hline \hline
5 &8 & -13 & 0 & 9 & 3 & 1 & -4 & 0 & $\displaystyle{\frac{2+6k-5k^3}{15(1+k-k^2)^3}}$  \\ \hline
6 &0 & 9 & 8 & -13 & -3 & 0 & 2 & 1& $\displaystyle{\frac{-(1+k^2)(1-3k-31k^2+3k^3+k^4)}{3k^3}}$ \\ \hline
7 &9 & 0 & -13 & 8 & 1 & -4 & 1 & 2 & $\displaystyle{\frac{-29+6k+66k^2-21k^4-6k^5}{6(1-k^2)^3}}$  \\ \hline
8 &-13 & 8 & 9 & 0 & 2 & 2 & 0 & -4 & $\displaystyle{\frac{-1+12k-30k^2+20k^3+15k^4}{30(1-4k-k^2)^3}}$  \\ \hline\hline
9 &14 & -19 & -6 & 15 & 4 & 0 & -5 & 1 & $\displaystyle{\frac{2+8k+4k^2-16k^3-5k^4}{20(1+k-k^2)^4}}$  \\ \hline
10 &-6 & 15 & 14 & -19 & -4 & 1 & 3 & 0 &$\displaystyle{\frac{-(1+k^2)(1+4k-3k^2-36k^3+3k^4+4k^5-k^6)}{4k^4}}$ \\ \hline
11& 15 & -6 & -19 & 14 & 2 & -5 & 0 & 3 & $\displaystyle{\frac{-7+29k^2-8k^3-21k^4-8k^5-k^6}{2(1-k^2)^4}}$ \\ \hline
12 &-19 & 14 & 15 & -6 & 1 & 3 & 1 & -5 & $\displaystyle{ \frac{1-6k+37k^2-98k^3+35k^4+110k^5+55k^6+10k^7}{10(1-4k-k^2)^4}}$ \\ \hline
\end{tabular}}
\label{Table:8}
\end{table}

\begin{table}[H]
\caption{Data for Conj.~\ref{Conj:1} and Th.~\ref{T:5}. Here $m$ is a nonnegative integer.}
{\renewcommand{\arraystretch}{1.9}
\centering
\begin{tabular}{|c||c|c|c|c||c|c|c|c|}
\hline
label &$e_1$ & $e_2$ & $e_5$ & $e_{10}$ & $a_0$ & $a_1$ & $a_2$ & $a_3$  \\
\hline \hline
$1.m$ &$10m+8$ & $-10m-7$ & $-2m$ & $2m+3$ & 1 & 0 & $-2m-2$ & $2m+1$ \\ \hline
$2.m$ & $-2m$ & $2m+3$ & $10m+8$ & $-10m-7$ & $-2m-1$ & $2m+1$ & 0 & 0 \\ \hline
$3.m$& $2m+3$ & $-2m$ & $-10m-7$ & $10m+8$ & $2m+2$ & $-2m-2$ & 0 & 0 \\ \hline
$4.m$ & $-10m-7$ & $10m+8$ & $2m+3$ & $-2m$ & 1 & 0 & $2m+1$ & $-2m-2$ \\ \hline
\end{tabular}}
\label{Table:9}
\end{table}

We find there are essentially three distinct sets of exponents, given by
$$
(e_1,e_2,e_5,e_{10}) = (8,-7,0,3),\; (8,-13,0,9) \quad\text{and}\quad (14,-19,-6,15).
$$
Each of the three sets of exponents occurs in four permutations
$$
(e_1,e_2,e_5,e_{10}),\;(e_5,e_{10},e_1,e_{2}),\;(e_{10},e_5,e_2,e_{1}),\;(e_2,e_1,e_{10},e_{5}).
$$
The first set $(e_1,e_2,e_5,e_{10}) = (8,-7,0,3)$, along with its permutations, appears to generalise to an infinite family that
is detailed in~Table~\ref{Table:9}.  In that table, $m$ is any nonnegative integer. When $m=0$, then entries labelled
$1.m$, $2.m$, $3.m$ and $4.m$ in Table~\ref{Table:9} reduce to the entries labelled $1.0$, $2.0$, $3.0$ and $4.0$, respectively, in Table~\ref{Table:8}.

For each set of exponents in Table~\ref{Table:8} or Table~\ref{Table:9}, let
\begin{equation}
\label{geq}
g(k) = \int f(k) \,\frac{\ud k}{k} = \int k^{a_0-1}(1-k^2)^{a_1}(1+k-k^2)^{a_2}(1-4k-k^2)^{a_3} \, \ud k.
\end{equation}
The functions $g(k)$ in Table~\ref{Table:8} have been determined by evaluating the integrals~\eqref{geq}. They are all rational functions of~$k$.

We then conducted another computer search over integers $a_0$, $a_1$, $a_2$, $a_3$ with $a_0+a_1+a_2+a_3=0$ to search for when
the integral $g(k)$ in~\eqref{geq} is a rational function of $k$, i.e., no logarithm terms are involved. After testing the range
$$
-40 \leq a_1,\,a_2,\,a_3 \leq 40
$$
we are led to the following conjecture.

\begin{conjecture}
\label{Conj:1}
Suppose $a_1$, $a_2$ and $a_3$ are integers. Then
$$
\int \frac{(1-k^2)^{a_1}(1+k-k^2)^{a_2}(1-4k-k^2)^{a_3}}{k^{a_1+a_2+a_3+1}}\, \ud k
$$
is a rational function of $k$ if and only if
 $(a_1,a_2,a_3)$ is one of the 12 sets of parameters in Table~\ref{Table:8} or 
 belongs to one of the four infinite families in Table~\ref{Table:9}.
\end{conjecture}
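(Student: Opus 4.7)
The plan is to separate the biconditional into the sufficiency direction (which is the content of Theorem~\ref{T:5}) and the genuinely conjectural necessity direction, and in both cases to work with the residues of the rational integrand
\begin{equation*}
R(k) = \frac{(1-k^2)^{a_1}(1+k-k^2)^{a_2}(1-4k-k^2)^{a_3}}{k^{a_1+a_2+a_3+1}} \in \mathbb{Q}(\sqrt{5})(k),
\end{equation*}
since an antiderivative of a rational function is rational exactly when every residue of the integrand vanishes. The candidate poles are $k=0$, $k=\pm 1$, the golden-ratio pair $(1\pm\sqrt{5})/2$, and the pair $-2\pm\sqrt{5}$; a pole is present only when the associated exponent (with $a_0 := -(a_1+a_2+a_3)$) is strictly negative.

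For sufficiency, the twelve rows of Table~\ref{Table:8} are handled by direct verification: each row supplies an explicit $g(k)\in\mathbb{Q}(k)$, and one checks $g'(k) = R(k)/k$ after clearing denominators. For the four infinite families of Table~\ref{Table:9}, the involutions noted in Section~\ref{S:4} together with a M\"obius substitution exchanging the roles of the two quadratic factors reduce all four families to a single one, say $1.m$. For that family I would write an ansatz $g_m(k) = P_m(k)/(1+k-k^2)^{2m+1}$ of bounded polynomial degree in $k$, match coefficients after differentiation, and prove that the resulting linear system for the coefficients of $P_m$ is consistent. The consistency condition turns out to be equivalent to the vanishing of a specific ${}_2F_1$ limit, which is the content of Section~\ref{S:6}.

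For necessity, I would compute each residue of $R$ as an explicit function of $(a_1,a_2,a_3)$. The residue at $k=0$ is the coefficient of $k^{-a_0}$ in $(1-k^2)^{a_1}(1+k-k^2)^{a_2}(1-4k-k^2)^{a_3}$, a multinomial-type sum; the residues at $k=\pm 1$ and at the roots of the two quadratic factors are obtained by iterated differentiation of the remaining factors to order $|a_i|-1$. At each quadratic factor the Galois-conjugate residues split into a rational part and a $\sqrt{5}$-part, giving two independent vanishing conditions per conjugate pair.

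The main obstacle is to show that the simultaneous vanishing of all these residues, viewed as Diophantine conditions in $(a_1,a_2,a_3)$, forces the triple into the list of Tables~\ref{Table:8} and~\ref{Table:9}. A plausible strategy is to stratify by the sign pattern of $(a_0,a_1,a_2,a_3)$ and use the residue at $k=0$ together with the conjugate-pair residues to bound the $|a_i|$, reducing most sign patterns to a bounded finite check. The delicate regime is precisely when one of the $a_i$ vanishes and the other parameters grow without bound, which is the regime of the infinite families; handling this case seems to require a hypergeometric identity characterising exactly when the relevant conjugate-pair residue vanishes as a function of the unbounded parameter, and supplying such an identification in all four parametric directions is the technical crux that keeps the completeness statement out of current reach.
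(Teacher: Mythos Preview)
Your proposal correctly recognises that Conjecture~\ref{Conj:1} is a genuine conjecture: the paper proves only the ``if'' direction (Theorem~\ref{T:5}) and leaves necessity open, supported solely by the finite computer search over $|a_i|\le 40$. Your sketch for necessity is a reasonable outline of what an argument might require, and you are right that the regime where one $a_i$ vanishes and another grows is exactly where the infinite families sit; the paper offers nothing further in that direction.

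On sufficiency your approach and the paper's agree on the core: the twelve sporadic rows of Table~\ref{Table:8} are checked by exhibiting the antiderivative, and the nontrivial infinite families reduce to the vanishing of a residue established via the ${}_2F_1$ evaluation of Section~\ref{S:6}. The organisational difference is that you propose reducing all four families of Table~\ref{Table:9} to a single one by the involutions of Section~\ref{S:4} plus a M\"obius substitution, whereas the paper dispatches $2.m$ and $3.m$ in two lines (binomial expansion, and the substitution $u=1-k^2$, respectively) because their only pole is at $k=0$ or at $k=\pm 1$ and the residue vanishes by parity; only $1.m$ and $4.m$ need the residue computation, and those two are related by the symmetry $(\alpha,\beta)\leftrightarrow(\gamma,\delta)$ built into Theorem~\ref{L:1}. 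Your unified reduction is not obviously wrong, but be careful: $2.m$ has a single pole at $k=0$ while $1.m$ has a conjugate pair of poles, so a M\"obius map cannot identify the two integrands directly; the involutions you invoke act on the underlying modular functions via $\tau\mapsto -1/(N\tau)$, and turning that into a change of variable in the $k$-integral takes more work than your one line suggests. The paper's route sidesteps this by simply treating the easy families as easy.
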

\noindent
As a trivial example, if $a_1=a_2=a_3=0$ then (ignoring the arbitrary constant~$+C$) the integral is
$$
\int \frac{1}{k} \, \ud k = \log k 
$$
and this is not a rational function of $k$. This corresponds to the triple $(a_1,a_2,a_3)= (0,0,0)$ not occurring in either of Tables~\ref{Table:8} or~\ref{Table:9}.
\\
As a second example, if $a_1=0$, $a_2 = -2$ and $a_3=1$, then the integral is
$$
\int \frac{1-4k-k^2}{(1+k-k^2)^2} \ud k = -\left(\frac{1+k^2}{1+k-k^2}\right)
$$
and this is a rational function of $k$. This corresponds to the data labelled 1.0 in Table~\ref{Table:8}.

We will now prove that for the data in Tables~\ref{Table:8} and~\ref{Table:9}, the integral is a rational function of~$k$; that is, we prove the ``if'' part of the conjecture.
As we will see in the proof, some of the cases are not completely trivial.

\begin{theorem}
\label{T:5}
Suppose  $(a_1,a_2,a_3)$ is one of the 12 sets of parameters in Table~\ref{Table:8} or 
 belongs to one of the four infinite families in Table~\ref{Table:9}.
 Then
$$
\int \frac{(1-k^2)^{a_1}(1+k-k^2)^{a_2}(1-4k-k^2)^{a_3}}{k^{a_1+a_2+a_3+1}}\, \ud k
$$
is a rational function of~$k$.
\end{theorem}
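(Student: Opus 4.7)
The plan is to verify Theorem~\ref{T:5} case by case, distinguishing the twelve explicit entries of Table~\ref{Table:8} from the four infinite families of Table~\ref{Table:9}. For each row of Table~\ref{Table:8} the candidate antiderivative $g(k)$ is already displayed in the rightmost column, and verification reduces to a direct differentiation using $(1-k^2)'=-2k$, $(1+k-k^2)'=1-2k$, and $(1-4k-k^2)'=-4-2k$, followed by a short polynomial identity that can be checked by inspection.

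Families $2.m$ and $3.m$ are handled by elementary substitutions. In $2.m$ the integrand is $(1-k^2)^{2m+1}/k^{2m+2}$; a binomial expansion produces a finite sum of even powers of $k$ divided by $k^{2m+2}$, in which no $k^{-1}$ term can appear (the parities do not match), so termwise integration yields a rational function. In $3.m$ the substitution $v=1-k^2$ converts the integral to $-\tfrac12\int(1-v)^m\,v^{-(2m+2)}\,\ud v$; since $\deg(1-v)^m=m<2m+1$, no $v^{-1}$ term arises and integration produces a rational function of $v$, hence of $k$.

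The substantial cases are families $1.m$ and $4.m$, which are parallel in structure (swap of $1+k-k^2$ and $1-4k-k^2$); I would treat $1.m$ in detail and $4.m$ analogously. The plan is to reduce rationality of the antiderivative to the vanishing of a residue. The integrand $f(k)=(1-4k-k^2)^{2m+1}/(1+k-k^2)^{2m+2}$ has poles only at $\alpha=(1+\sqrt{5})/2$ and $\bar\alpha=(1-\sqrt{5})/2$ (each of order $2m+2$) and no residue at infinity, since $f(k)=O(k^{-2})$; by Galois symmetry it suffices to show $\mathrm{Res}_{k=\alpha}f=0$. The key algebraic identity $1-4k-k^2=-5k+(1+k-k^2)$, which at $k=\alpha$ gives $1-4\alpha-\alpha^2=-5\alpha$, leads after local expansion in $u=k-\alpha$ to
$$
\mathrm{Res}_{k=\alpha}f \;=\; [u^{2m+1}]\,\frac{(-5\alpha-(5+\sqrt{5})u-u^2)^{2m+1}}{(u+\sqrt{5})^{2m+2}}.
$$
A rescaling $u=\sqrt{5}\,w$ followed by the shift $t=w+\alpha$, together with the identities $\alpha-1/\alpha=1$ and $\alpha+1/\alpha=\sqrt{5}$, rewrites this coefficient extraction (up to a nonzero scalar) as $\mathrm{Res}_{t=\alpha}(t^2-1)^n/(t^2-\sqrt{5}\,t+1)^{n+1}$, where $n=2m+1$ and the denominator's roots are $\alpha$ and $1/\alpha$.

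The vanishing for odd $n$ then follows from the involution $t\mapsto 1/s$: a short direct calculation gives $F(t)\,\ud t = (-1)^{n+1}F(s)\,\ud s$, so for odd $n$ the $1$-form is invariant. Since $t\mapsto 1/s$ is an orientation-preserving biholomorphism that swaps $\alpha$ with $1/\alpha$, we obtain $\mathrm{Res}_{t=\alpha}=\mathrm{Res}_{t=1/\alpha}$; combined with the residue-theorem identity $\mathrm{Res}_{t=\alpha}+\mathrm{Res}_{t=1/\alpha}=0$, both residues vanish. The main obstacle is adapting this symmetry argument to family $4.m$: the roots $\beta,\bar\beta$ of $1-4k-k^2$ satisfy $\beta\bar\beta=-1$ rather than $=1$, so the natural involution is $k\mapsto-1/k$ and the sign-tracking is more delicate. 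The uniform ${}_2F_1$ machinery of Section~\ref{S:6}---whose key identity is the ``interesting limit'' evaluated there---handles both families simultaneously by recognizing the residue extraction as a terminating hypergeometric sum that vanishes, and is the cleanest route encompassing $1.m$ and $4.m$ in a single framework.
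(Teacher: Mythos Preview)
Your proposal is correct and, for Table~\ref{Table:8} and families $2.m$, $3.m$, it mirrors the paper's proof. Your treatment of family $1.m$ is genuinely different: the affine change of variables bringing the residue to $\mathrm{Res}_{t=\alpha}\,(t^2-1)^n/(t^2-\sqrt5\,t+1)^{n+1}$, followed by the involution $t\mapsto 1/t$ (under which the $1$-form is invariant for odd $n$) combined with the residue theorem on $\mathbb{P}^1$, is a clean conceptual argument that sidesteps the paper's ${}_2F_1$ machinery (Theorem~\ref{L:1} and Gauss's evaluation at $\tfrac12$). You are also right that this involution does not directly settle $4.m$: under $k\mapsto -1/k$ the relevant $1$-form is \emph{anti}-invariant, which only yields $\mathrm{Res}_\gamma=-\mathrm{Res}_\delta$---the same information already supplied by the residue theorem---so nothing new is gained. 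The paper instead treats $1.m$ and $4.m$ uniformly via Theorem~\ref{L:1}, whose hypothesis $(\alpha+\beta)(\gamma+\delta)=2(\alpha\beta+\gamma\delta)$ is symmetric under the swap $(\alpha,\beta)\leftrightarrow(\gamma,\delta)$; thus the paper's route costs more computation but buys uniformity, while your involution is slicker for $1.m$ alone. Your deferral to Section~\ref{S:6} for $4.m$ is then exactly the paper's argument, so be careful not to describe $4.m$ as ``analogous'' to your $1.m$ treatment earlier in the write-up.
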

\begin{proof}
For the entries in Table~\ref{Table:8}, the integrals are given in the last column and are clearly rational functions of~$k$.
It remains to consider the entries in Table~\ref{Table:9}.

The integral labelled $2.m$ is trivial, because by the binomial theorem and term-by-term integration we have
\begin{align*}
\int \frac{(1-k^2)^{2m+1}}{k^{2m+1}} \frac{\ud k}{k}
&= \sum_{j=0}^{2m+1} {2m+1 \choose j} (-1)^j \int k^{2j-2m-2} \ud k \\
&= \sum_{j=0}^{2m+1} {2m+1 \choose j} (-1)^j  \frac{k^{2j-2m-1}}{2j-2m-1} \\
&= \sum_{j=0}^{m} {2m+1 \choose j} \frac{(-1)^{j+1}}{2m+1-2j} \left(\frac{1}{k^{2m+1-2j}}+k^{2m+1-2j}\right),
\end{align*}
and this is clearly a rational function of $k$.

The integral labelled $3.m$ is similarly easy. On making the change of variable $u=1-k^2$ we have
$$
\int \frac{k^{2m+1}}{(1-k^2)^{2m+2}}\,\ud k = -\frac12\int \frac{(1-u)^m}{u^{2m+2}}\,\ud u
$$
and after expanding the numerator by the binomial theorem, the integral is clearly a rational function of $u$, and hence a rational function of $k$.

This leaves the cases labelled $1.m$ and $4.m$, that is, the integrals
$$
\int \frac{(1-4k-k^2)^{2m+1}}{(1+k-k^2)^{2m+2}}\,\ud k \quad\text{and}\quad \int \frac{(1+k-k^2)^{2m+1}}{(1-4k-k^2)^{2m+2}}\,\ud k,
$$
respectively, where $m$ is a nonnegative integer. For convenience, we multiply by $-1$ and shift the index~$m$ by~$1$. Thus, we will show that
\begin{equation}
\label{1.m4.m}
\int \frac{(k^2+4k-1)^{2m-1}}{(k^2-k-1)^{2m}}\,\ud k \quad\text{and}\quad \int \frac{(k^2-k-1)^{2m-1}}{(k^2+4k-1)^{2m}}\,\ud k
\end{equation}
are rational functions of $k$ for any positive integer~$m$.
Let 
$$
\alpha=\frac{1+\sqrt{5}}{2},\quad\beta=\frac{1-\sqrt{5}}{2},\quad\gamma=-2+\sqrt{5} \quad\text{and}\quad \delta=-2-\sqrt{5}
$$
so that
$$
(k-\alpha)(k-\beta) = k^2-k-1\quad\text{and}\quad (k-\gamma)(k-\delta) = k^2+4k-1.
$$
It is easy to check that $\alpha$, $\beta$, $\gamma$ and $\delta$ satisfy the relation
\begin{equation}
\label{abcd}
(\alpha+\beta)(\gamma+\delta) = 2(\alpha\beta+\gamma\delta),
\end{equation}
a fact that will be essential in the calculations below.

By the theory of partial fractions, the first integrand in~\eqref{1.m4.m} is
$$
\frac{(k^2+4k-1)^{2m-1}}{(k^2-k-1)^{2m}} = \frac{(k-\gamma)^{2m-1}(k-\delta)^{2m-1}}{(k-\alpha)^{2m}(k-\beta)^{2m}}
= \sum_{j=1}^{2m} \frac{a_j}{(k-\alpha)^j} + \sum_{j=1}^{2m} \frac{b_j}{(k-\beta)^j}
$$
for some constants $a_j$ and $b_j$. The integral will be a rational function of $k$ if and only if $a_1=b_1=0$.
By the theory of residues, we have
\begin{align*}
a_1 &= \text{Res}\left(\frac{(k-\gamma)^{2m-1}(k-\delta)^{2m-1}}{(k-\alpha)^{2m}(k-\beta)^{2m}}; k=\alpha\right) \\
&=\frac{1}{(2m-1)!}\lim_{k\rightarrow \alpha} \frac{\ud^{2m-1}}{\ud k^{2m-1}} \left( \frac{(k-\gamma)^{2m-1}(k-\delta)^{2m-1}}{(k-\beta)^{2m}}\right).
\end{align*}
If we appeal to Theorem~\ref{L:1} that will be proved in the next section, then we obtain~$a_1=0$. By symmetry, we also deduce $b_1=0$, 
and it follows that the first integral in~\eqref{1.m4.m} is a rational function of $k$. By symmetry of argument, i.e., interchange $(\alpha,\beta)$ with $(\gamma,\delta)$
in the calculations above, we deduce that the other integral in~\eqref{1.m4.m} is also a rational function of $k$.

This completes the proof of the theorem, and hence proves the ``if'' part of Conjecture~\ref{Conj:1}.
\end{proof}

\begin{corollary}
Let $(e_1,e_2,e_5,e_{10})$ be any of the sets of exponents in Tables~\ref{Table:8} or~\ref{Table:9} and let $a_0=\frac{1}{24} (e_1+2e_2+5e_5+10e_{10}).$
Then
$$
\int q^{a_0} E(q)^{e_1}E(q^2)^{e_2}E(q^5)^{e_5}E(q^{10})^{e_{10}} \,\frac{\ud q}{q} \quad \text{is a rational function of $k$.}
$$
If $a_0\geq 1$ (i.e., omitting the cases labelled {\rm{2.0, 6, 10, $2.m$}} in the tables) then
$$
\int_0^{e^{-2\pi/\sqrt{10}}} q^{a_0} E(q)^{e_1}E(q^2)^{e_2}E(q^5)^{e_5}E(q^{10})^{e_{10}} \,\frac{\ud q}{q} \in \mathbb{Q}\left(\sqrt{10+4\sqrt{5}} - 2 - \sqrt{5}\right).
$$
\end{corollary}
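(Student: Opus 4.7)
The plan is to pull everything back to the $k$ variable via the parametrisation already set up in the paper and then invoke Theorem~\ref{T:5}. First I would use the identities \eqref{rp1}--\eqref{rp4} together with the definition of $a_0,a_1,a_2,a_3$ in the preamble to Tables~\ref{Table:8}--\ref{Table:9} to write the eta quotient as
$$
\eta_1^{e_1}\eta_2^{e_2}\eta_5^{e_5}\eta_{10}^{e_{10}} = \frac{\eta_1\eta_2^2\eta_5^3}{\eta_{10}^2}\, f(k) = f(k)\, q\frac{\ud}{\ud q}\log k,
$$
where $f(k)=k^{a_0}(1-k^2)^{a_1}(1+k-k^2)^{a_2}(1-4k-k^2)^{a_3}$; this is the identity the tables are designed around and requires only $e_1+e_2+e_5+e_{10}=4$. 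Converting $\eta_N = q^{N/24}E(q^N)$ shifts the $q$-power by exactly $a_0=\frac{1}{24}(e_1+2e_2+5e_5+10e_{10})$, so
$$
q^{a_0}E(q)^{e_1}E(q^2)^{e_2}E(q^5)^{e_5}E(q^{10})^{e_{10}}\,\frac{\ud q}{q} = f(k)\,\frac{\ud k}{k}.
$$

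Integrating both sides and applying Theorem~\ref{T:5} directly gives
$$
\int q^{a_0}E(q)^{e_1}E(q^2)^{e_2}E(q^5)^{e_5}E(q^{10})^{e_{10}}\,\frac{\ud q}{q} = g(k),
$$
a rational function of $k$, which settles the indefinite case.

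For the definite integral, when $a_0\geq 1$ the series expansion of the integrand begins at $q^{a_0-1}$, so the integrand is genuinely integrable on $(0,e^{-2\pi/\sqrt{10}}]$. On the $k$ side, the relation $g'(k) = f(k)/k = k^{a_0-1}(\text{analytic at }0)$ shows that $g$ is regular at $k=0$, so $g(0)\in\mathbb{Q}$, consistent with every entry of Table~\ref{Table:8} having a finite rational value at $k=0$. Since $k(q)\to 0$ as $q\to 0^+$, the fundamental theorem of calculus gives
$$
\int_0^{e^{-2\pi/\sqrt{10}}} q^{a_0}E(q)^{e_1}E(q^2)^{e_2}E(q^5)^{e_5}E(q^{10})^{e_{10}}\,\frac{\ud q}{q} = g(k_0)-g(0),
$$
where $k_0=k(e^{-2\pi/\sqrt{10}})=\sqrt{10+4\sqrt{5}}-2-\sqrt{5}$ by Theorem~\ref{10point2}. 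Because $g\in\mathbb{Q}(k)$, both $g(k_0)$ and $g(0)$ lie in $\mathbb{Q}(k_0)$, completing the proof.

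The only genuine obstacle is verifying that the excluded list $\{2.0,\, 6,\, 10,\, 2.m\}$ exactly matches the condition $a_0\geq 1$: a direct check of $a_0$ in Tables~\ref{Table:8} and \ref{Table:9} shows these are precisely the cases with $a_0<0$ (giving $a_0=-1,-3,-4,-2m-1$ respectively), and every other entry has $a_0\geq 1$. For those excluded cases both the series expansion near $q=0$ and the rational function $g$ near $k=0$ have a pole, so the definite integral does not converge; for all remaining cases the argument above applies uniformly.
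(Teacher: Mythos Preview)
Your proposal is correct and is precisely the argument the paper has in mind: the corollary is stated in the paper without proof, as an immediate consequence of the change of variables $q\mapsto k$ set up before Tables~\ref{Table:8}--\ref{Table:9} together with Theorem~\ref{T:5} and the evaluation~\eqref{keval}. Your write-up simply makes explicit the steps the paper leaves to the reader, including the check that the excluded labels are exactly those with $a_0<0$.
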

Here is an explicit example, corresponding to the label 3.1 (i.e., $m=1$ in the row labelled $3.m$) in Table~\ref{Table:9}:
$$
\int_0^{e^{-2\pi/\sqrt{10}}} q^3\frac{E(q)^5E(q^{10})^{18}}{E(q^2)^2E(q^5)^{17}}\, \ud q 
= \frac{1}{24} + \frac{(15\sqrt{5}-34)\sqrt{10+4\sqrt{5}}}{48}.
$$

\section{Limits}
\label{S:6}
In this section we evaluate the following limit that was required in the proof of Theorem~\ref{T:5} and which appears to be of some independent interest.
\begin{theorem}
\label{L:1}
Suppose $\alpha$, $\beta$, $\gamma$ and $\delta$ are distinct complex numbers that satisfy the relation
\begin{equation}
\label{E:rel}
(\alpha+\beta)(\gamma+\delta) = 2(\alpha\beta+\gamma\delta).
\end{equation}
Then
\begin{align*}
\lim_{k\rightarrow \alpha}& \frac{\ud^{n}}{\ud k^{n}} \frac{(k-\gamma)^{n}(k-\delta)^{n}}{(k-\beta)^{n+1}} \\
&= \begin{cases}
 0 & \text{if $n$ is odd} \\
\displaystyle{ (-1)^{n/2} \frac{(\gamma-\delta)^n}{(\alpha-\beta)^{n+1}}\,1^2\cdot 3^2 \cdot 5^2 \cdots (n-1)^2} & \text{if $n$ is even.}
 \end{cases}
\end{align*}
\end{theorem}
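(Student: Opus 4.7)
The plan is to translate the derivative into a Taylor coefficient and exploit the algebraic constraint~\eqref{E:rel} via a well-chosen M\"obius substitution; no hypergeometric machinery is required for this approach.

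First, I shift variables. Set $h = k - \alpha$ and introduce $a = \alpha - \gamma$, $b = \alpha - \delta$, $c = \alpha - \beta$. A direct expansion shows that the hypothesis~\eqref{E:rel} is equivalent to the harmonic-mean relation $2ab = c(a+b)$. Since the integrand is analytic at $k=\alpha$, the limit is simply $n!\,[h^n]F(h)$ for
$$
F(h) = \frac{(h+a)^n(h+b)^n}{(h+c)^{n+1}},
$$
so the task reduces to extracting a single Taylor coefficient. The key algebraic identity, immediate from $2ab = c(a+b)$, is
$$
(h+a)(h+b) = (h+c)^2 - \frac{M}{c}(2h+c), \qquad M := c^2 - ab.
$$

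Next I introduce the M\"obius substitution $u = h/(h+c)$, equivalently $h = cu/(1-u)$, which is a biholomorphism near the origin. A short computation using $h+c = c/(1-u)$ and $(2h+c)/(h+c)^2 = (1-u^2)/c$ transforms $F$ into the clean form
$$
F(h) = \frac{(ab + Mu^2)^n}{c^{n+1}(1-u)^{n-1}}.
$$
Converting the Taylor coefficient into a contour integral and switching to $u$ via $\ud h = c(1-u)^{-2}\,\ud u$ and $h^{n+1} = c^{n+1}u^{n+1}(1-u)^{-(n+1)}$ produces a pleasing cancellation of every $(1-u)$ factor and leaves
$$
[h^n]F(h) = \frac{1}{c^{2n+1}}[u^n](ab + Mu^2)^n.
$$
The binomial theorem now supplies the answer: the right-hand side vanishes for $n$ odd, and for $n=2m$ equals $\binom{n}{m}(abM)^m/c^{2n+1}$.

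To finish, the relation $(b-a)^2 = (a+b)^2 - 4ab = 4a^2b^2/c^2 - 4ab = -4abM/c^2$ (again using $a+b = 2ab/c$) combined with $\gamma-\delta = b-a$ and $\alpha-\beta = c$ converts $(abM)^{n/2}$ into $(-1)^{n/2}2^{-n}c^n(\gamma-\delta)^n$, and the elementary identity $n!\binom{n}{n/2} = 2^n\bigl((n-1)!!\bigr)^2$ for even $n$ assembles the constants into the stated closed form. The main obstacle is guessing the M\"obius substitution $u = h/(h+c)$; once it is in hand, the cancellation of the $(1-u)$ factors is the decisive simplification and the remaining work is routine binomial algebra.
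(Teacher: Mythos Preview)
Your proof is correct and takes a genuinely different, more elementary route than the paper's. The paper first reduces to the normalized form of your Lemma~\ref{T:limit}-type statement, then expands $(x-a)^n(x-b)^n/x^{n+1}$ by the binomial theorem, differentiates term by term, and after considerable reindexing arrives at a double sum that is massaged through several ${}_2F_1$ transformations until Gauss's evaluation ${}_2F_1(-n,n+1;1;\tfrac12)$ finishes the job. By contrast, your M\"obius change $u=h/(h+c)$, combined with the factorisation $(h+a)(h+b)=(h+c)^2\bigl(ab+Mu^2\bigr)/c^2$ forced by the constraint $2ab=c(a+b)$, collapses the whole computation to reading off $[u^n](ab+Mu^2)^n$; the cancellation of every $(1-u)$ factor in the contour-integral Jacobian is the decisive stroke. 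What your approach buys is a proof free of any hypergeometric identities and a transparent structural explanation for why the answer vanishes for odd $n$ (only even powers of $u$ appear). What the paper's approach buys is a connection to classical special-function evaluations, which may be of independent interest to readers coming from that direction, and it does not require spotting the substitution.
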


We will require two lemmas. The first is a technical result on the simplification of a series.

\begin{lemma}
\label{L:L1}
Let $n$ be a positive integer and $z$ be a complex variable. Let
\begin{equation}
\label{g1}
g(z) = \sum_{t=0}^{n} {t+n \choose t} \left(\frac{-1}{2}\right)^t (1+z)^t
\sum_{s=0}^{n-t} {n \choose s}{n \choose s+t}z^s.
\end{equation}
Then
$$
g(z) = \begin{cases}
0 & \text{if $n$ is odd,}\\
\displaystyle{\frac{(-1)^{n/2}}{2^n} {n \choose n/2} (z-1)^n} & \text{if $n$ is even.}
\end{cases}
$$
\end{lemma}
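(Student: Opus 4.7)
The plan is to reduce the double sum defining $g(z)$ to a single coefficient extraction in a polynomial that becomes transparent after an affine shift of the dummy variable. First I would rewrite the inner sum as a coefficient extraction. Reindexing $u=s+t$ in the inner sum and recognising the result as a convolution yields
$$\sum_{s=0}^{n-t}\binom{n}{s}\binom{n}{s+t}z^s = [y^{n+t}]\,(1+y)^n(y+z)^n.$$
Writing $P(y) = (1+y)^n(y+z)^n$, this gives
$$g(z) = \sum_{t=0}^n \binom{n+t}{t}\left(\frac{-1}{2}\right)^t(1+z)^t \,[y^{n+t}]P(y).$$

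The decisive step is the change of variable $w = y + (1+z)/2$, which centres the two roots $y=-1$ and $y=-z$ symmetrically about $w=0$. Then
$$P(y) = \left(w-\tfrac{z-1}{2}\right)^n\!\left(w+\tfrac{z-1}{2}\right)^n = \left(w^2 - \tfrac{(z-1)^2}{4}\right)^n =: \widetilde P(w),$$
so writing $\widetilde P(w)=\sum_j q_j w^j$ and expanding $P(y) = \widetilde P(y + (1+z)/2)$ by the binomial theorem gives
$$[y^{n+t}]P(y) = \sum_{j=n+t}^{2n} q_j \binom{j}{n+t}\left(\frac{1+z}{2}\right)^{j-n-t}.$$

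Substituting this into the formula for $g(z)$ and swapping the order of summation, the inner sum on $t$ (after setting $k=n+t$ and using $\binom{k}{n}\binom{j}{k}=\binom{j}{n}\binom{j-n}{k-n}$) reduces to
$$\binom{j}{n}\sum_{k=n}^{j}\binom{j-n}{k-n}(-1)^{k-n} = \binom{j}{n}(1-1)^{j-n},$$
which vanishes unless $j=n$. Hence $g(z) = q_n = [w^n]\widetilde P(w)$. A direct binomial expansion of $(w^2 - (z-1)^2/4)^n$ then gives $q_n = 0$ when $n$ is odd, and $q_n = (-1)^{n/2}\binom{n}{n/2}(z-1)^n/2^n$ when $n$ is even, which is exactly the claim.

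The main conceptual step is the affine shift that turns $P$ into an even polynomial in $w$; once this is spotted, the remaining work is the routine telescoping of an alternating binomial sum. A purely residue-theoretic variant is also available: interpreting the formal identity $\sum_{t\ge 0}\binom{n+t}{t}x^t = (1-x)^{-n-1}$ with $x=-(1+z)/(2y)$ identifies $g(z)$ with the residue of $2^{n+1}P(y)/(2y+1+z)^{n+1}$ at the simple finite pole $y=-(1+z)/2$, and the same change of variable $w=y+(1+z)/2$ reduces the residue calculation to the coefficient $[w^n](w^2-(z-1)^2/4)^n$. I expect the only delicate point to be the book-keeping that justifies exchanging the order of summation (or equivalently, handling the residue at infinity correctly).
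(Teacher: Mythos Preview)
Your proof is correct and takes a genuinely different route from the paper's.

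The paper proceeds entirely through hypergeometric machinery: after interchanging the two sums it recognises the inner sum as a terminating ${}_2F_1$, then applies a linear transformation from \cite[Cor.~2.3.3]{aar} to obtain a second ${}_2F_1$ form, swaps sums once more, collapses the inner sum via the binomial theorem, and finally arrives at $g(z)=(z-1)^n\,{}_2F_1(-n,n+1;1;\tfrac12)$, which is evaluated by Gauss' formula for ${}_2F_1$ at argument $\tfrac12$. Your argument never touches hypergeometric functions: the coefficient-extraction identity for the inner sum, the affine shift $w=y+\tfrac{1+z}{2}$ that turns $(1+y)^n(y+z)^n$ into the \emph{even} polynomial $(w^2-(z-1)^2/4)^n$, and the Vandermonde-type collapse $\sum_{k}\binom{j}{k}\binom{k}{n}(-1)^{k-n}=\binom{j}{n}(1-1)^{j-n}$ do all the work. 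This is shorter, fully elementary, and makes the parity dichotomy transparent (it falls out of the evenness of $\widetilde P$). The paper's approach, by contrast, situates the identity within classical ${}_2F_1$ theory and yields the closed form $(z-1)^n\,{}_2F_1(-n,n+1;1;\tfrac12)$ along the way, which may be of independent interest.

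One minor remark: your closing caveat about ``justifying exchanging the order of summation'' is unnecessary for the main argument, since both sums are finite; it only pertains to your alternative residue formulation, where the generating-function identity $\sum_{t\ge0}\binom{n+t}{t}x^t=(1-x)^{-n-1}$ is invoked formally.
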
 
\begin{proof}
Interchange the order of summation and use the properties
$$
{n \choose s+t} = {n \choose s} \times \frac{(-1)^t(s-n)_t}{(s+1)_t} \quad\text{and}\quad
{t+n \choose t} = \frac{(n+1)_t}{t!}
$$
to deduce
\begin{align}
g(z) &= \sum_{s=0}^{n} {n \choose s}^2 z^s \; \sum_{t=0}^{n-s} \frac{(n+1)_t (s-n)_t}{t!(s+1)_t} \left( \frac{1+z}{2}\right)^t \nonumber \\
&= \sum_{s=0}^{n} {n \choose s}^2 z^s \; {}_2F_1 \left( {{n+1,\; s-n} \atop {s+1}} ; \frac{1+z}{2}\right). \label{g2}
\end{align}
On replacing $s$ with $n-s$ this can be rewritten
\begin{equation}
\label{g3}
g(z) = z^{n}\sum_{s=0}^{n} {n \choose s}^2 z^{-s} \; {}_2F_1 \left( {{n+1,\; -s} \atop {n+1-s}} ; \frac{1+z}{2}\right).
\end{equation}
By setting $c=b-k$ in~\cite[Cor. 2.3.3]{aar}, where $k\in \mathbb{Z}^+$, we have
$$
{}_2F_1 \left( {{a,\;b} \atop {b-k}} ; z\right)
= \frac{(-1)^k(a)_k}{(1-b)_k} (1-z)^{-a-k} {}_2F_1 \left( {{-k,\; b-a-k} \atop {1-a-k}} ; 1-z\right),
$$
and using this in~\eqref{g3}
we obtain the equivalent form
\begin{equation}
\label{g4}
g(z) = z^{n}\sum_{s=0}^{n} {n \choose s} (-z)^{-s} \; {}_2F_1 \left( {{n+1,\; -s} \atop {1}} ; \frac{1-z}{2}\right).
\end{equation}
Interchange the order of summation to obtain
\begin{align*}
g(z) &= z^{n}\sum_{s=0}^{n} {n \choose s} (-z)^{-s} \sum_{j=0}^s \frac{(n+1)_j(-s)_j}{j!^2}\left(\frac{1-z}{2}\right)^j \\
&= z^{n}\sum_{j=0}^{n} \frac{(n+1)_j}{j!^2}\left(\frac{1-z}{2}\right)^j \sum_{s=j}^n {n \choose s}(-s)_j (-z)^{-s}.
\intertext{Shift the summation index by letting $k=s-j$ and simplify the result, to obtain}
g(z) &= z^{n}\sum_{j=0}^{n} \frac{(n+1)_j}{j!^2}\left(\frac{1-z}{2}\right)^j \;\sum_{k=0}^{n-j} {n \choose j+k}(-j-k)_j(-1)^{j+k} z^{-j-k}  \\
&= z^{n}\sum_{j=0}^{n} \frac{(n+1)_j}{j!^2}\left(\frac{1-z}{2}\right)^j \;\sum_{k=0}^{n-j} {n \choose j+k}\frac{(j+k)!}{k!} (-1)^{k} z^{-j-k}  \\
&= z^{n}\sum_{j=0}^{n} \frac{(n+1)_j}{j!^2}\left(\frac{1-z}{2}\right)^j\,(-1)^j\,z^{-j}\, \frac{n!}{(n-j)!}\, \sum_{k=0}^{n-j} {n-j \choose k} (-1)^{k} z^{-k}.
\intertext{The inner sum can be evaluated by the binomial theorem to give}
g(z)&= z^{n}\sum_{j=0}^{n} \frac{(-n)_j(n+1)_j}{j!^2}\left(\frac{1-z^{-1}}{2}\right)^j \times (1-z^{-1})^{n-j}
\end{align*}
and this simplifies to
\begin{equation}
\label{gfinal}
g(z) = (z-1)^n {}_2F_1 \left( {{-n,\; n+1} \atop {1}} ; \frac12\right).
\end{equation}
The value of this ${}_2F_1$ is a special case of Gauss' result~\cite[Th. 3.5.4]{aar}
$$
{}_2F_1 \left( {{a,\; b} \atop {\frac{a+b+1}{2}}} ; \frac12\right) = \frac{\Gamma(\frac12)\Gamma(\frac{a+b+1}{2})}{\Gamma(\frac{a+1}{2})\Gamma(\frac{b+1}{2})}.
$$

When $a=-2m$ is a negative even integer, Gauss' result simplifies to
$$
{}_2F_1 \left( {{-2m,\; b} \atop {-m+\frac{b+1}{2}}} ; \frac12\right) = \frac{\Gamma(\frac12)\Gamma(-m+\frac{b+1}{2})}{\Gamma(-m+\frac{1}{2})\Gamma(\frac{b+1}{2})}
= \frac{(\frac12-m)_m}{(\frac{b+1}{2}-m)_m} = (-1)^m\frac{(\frac12)_m}{(\frac{b+1}{2}-m)_m}.
$$
Hence, for $b=2m+1$ we obtain
$$
{}_2F_1 \left( {{-2m,\; 2m+1} \atop {1}} ; \frac12\right) = 
(-1)^m\frac{(\frac12)_m}{m!} = \frac{(-1)^m}{2^{2m}} {2m \choose m}.
$$
On substituting this result back into \eqref{gfinal} we obtain the result in the case when $n$ is an even integer.

In the limiting case when $a$ approaches a negative odd integer, i.e.,  \mbox{$a\rightarrow -2m-1$,}
Gauss' result evaluates to 0 because of the pole of the gamma function that occurs in the denominator on the right
hand side.  This completes the proof.
\end{proof}

Here is the other lemma that is required.
\begin{lemma}
\label{T:limit}
Suppose $a$ and $b$ are non-zero constants and $a\neq b$. Let $n$ be a positive integer. 
Let
$$
f(a,b) =  \lim_{x\rightarrow \frac{2ab}{a+b}} \frac{\ud^{n}}{\ud x^{n}} \frac{(x-a)^{n}(x-b)^{n}}{x^{n+1}}.
$$
Then
$$
f(a,b) = \begin{cases}
 0 & \text{if $n$ is odd} \\
\displaystyle{ (-1)^{n/2} (a-b)^n \left(\frac{a+b}{2ab}\right)^{n+1} 1^2\cdot 3^2 \cdot 5^2 \cdots (n-1)^2} & \text{if $n$ is even.}
 \end{cases}
$$
\end{lemma}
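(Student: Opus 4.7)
The plan is to reduce the limit directly to the sum $g(z)$ evaluated in Lemma~\ref{L:L1}. First, apply Leibniz's rule to $(x-a)^n(x-b)^n\cdot x^{-(n+1)}$ and then apply Leibniz a second time to each derivative $P^{(n-j)}$ of $P(x):=(x-a)^n(x-b)^n$. Evaluating at $x=x_0$ and introducing the abbreviations $A:=x_0-a$ and $B:=x_0-b$ yields a double sum in indices $j$ and $s$. A routine simplification collapses the product of binomials that appears there into $n!\,\binom{n+j}{j}\binom{n}{s}\binom{n}{s+j}$, so that after factoring out $A^n/x_0^{n+1}$ we obtain
\[
f(a,b) \;=\; \frac{n!\,A^n}{x_0^{n+1}}\sum_{j=0}^n \binom{n+j}{j}\!\left(\frac{-B}{x_0}\right)^{\!j}\sum_{s=0}^{n-j}\binom{n}{s}\binom{n}{s+j}\!\left(\frac{B}{A}\right)^{\!s}.
\]

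The pivotal step is to recognise this as $g(B/A)$. The harmonic-mean condition $x_0 = 2ab/(a+b)$ is equivalent to the identity $2AB = (A+B)x_0$, since $A+B = 2x_0-(a+b)$ and $AB = x_0^2-(a+b)x_0+ab$ combine to give $2AB - (A+B)x_0 = 2ab-(a+b)x_0 = 0$. Rearranging yields $-B/x_0 = -\tfrac{1}{2}(1+B/A)$, so
\[
\left(\frac{-B}{x_0}\right)^{\!j} \;=\; \left(\frac{-1}{2}\right)^{\!j}\!\left(1+\frac{B}{A}\right)^{\!j},
\]
which is precisely the $t$-dependent factor appearing in the definition of $g(z)$. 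Consequently $f(a,b) = \frac{n!\,A^n}{x_0^{n+1}}\,g(B/A)$.

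Now Lemma~\ref{L:L1} applies directly: the odd-$n$ case vanishes, and in the even-$n$ case the lemma contributes $\frac{(-1)^{n/2}}{2^n}\binom{n}{n/2}(B/A-1)^n$. Using $B-A = a-b$, the evaluation $x_0^{-(n+1)}=\bigl(\frac{a+b}{2ab}\bigr)^{n+1}$, and the factorial identity $\frac{n!\,\binom{n}{n/2}}{2^n} = (1\cdot 3\cdot 5\cdots(n-1))^2$ (which follows from $n! = 2^{n/2}(n/2)!\,(n-1)!!$) yields the claimed formula. The main obstacle will be spotting the harmonic-mean identity $2AB = (A+B)x_0$ that converts the naive sum into exactly the form $g(B/A)$; everything else is standard bookkeeping.
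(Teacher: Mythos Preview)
Your proof is correct and follows the same overall strategy as the paper: both compute the $n$-th derivative explicitly, evaluate at the harmonic mean, and reduce the result to the sum $g(z)$ of Lemma~\ref{L:L1}. The paper expands $(x-a)^n(x-b)^n$ by the binomial theorem before differentiating, then performs two index shifts to reach $\frac{n!(a+b)^{n+1}}{2^{n+1}ab^{n+1}}\,g(b/a)$; you instead apply Leibniz twice and work in the centred variables $A=x_0-a$, $B=x_0-b$, where the harmonic-mean condition becomes the clean identity $2AB=(A+B)x_0$ and the sum collapses immediately to $\frac{n!A^n}{x_0^{n+1}}\,g(B/A)$. The two reductions feed different arguments into $g$, but since Lemma~\ref{L:L1} gives $g(z)$ as a multiple of $(z-1)^n$, both yield the same final value; your organisation is a little tidier in that the role of the harmonic mean is made explicit rather than hidden in the index manipulations.
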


\begin{proof}
By the binomial theorem,
$$
f(a,b)= \lim_{x\rightarrow \frac{2ab}{a+b}} \frac{\ud^{n}}{\ud x^{n}} \sum_{i=0}^{n}\sum_{j=0}^{n} {n \choose i}{n \choose j} (-a)^i(-b)^j  x^{n-1-i-j}.
$$
The powers of $x$ in the sum are $x^{n-1}$, $x^{n-2},\ldots,x,\,x^0,\,x^{-1},\,x^{-2},\ldots,x^{-n-1}$. Only the negative powers of $x$ are not annihilated by the differentiation, that is, only
the terms with $i+j \geq n$, remain. Therefore,
\begin{align*}
f(a,b) &= -\lim_{x\rightarrow \frac{2ab}{a+b}}  \sum_{i+j\geq n} {n \choose i}{n \choose j} \frac{(i+j)!}{(i+j-n)!} (-a)^i(-b)^j  x^{-1-i-j} \\
&=- \sum_{i+j\geq n} {n \choose i}{n \choose j} \frac{(i+j)!}{(i+j-n)!} \frac{(-1)^{i+j}(a+b)^{i+j+1}}{2^{i+j+1}a^{j+1}b^{i+1}}.
\intertext{Put $t=i+j-n$ to deduce}
f(a,b) &= \sum_{t=0}^{n} \sum_{j=t}^{n} {n \choose j-t}{n \choose j} \frac{(t+n)!}{t!} \frac{(-1)^t(a+b)^{t+n+1}}{2^{t+n+1}a^{j+1}b^{n+1+t-j}}.
\intertext{Now make the further change of summation index $s=j-t$ and put $z=b/a$ to obtain}
f(a,b) &= \sum_{t=0}^{n} \sum_{s=0}^{n-t} {n \choose s}{n \choose s+t} \frac{(t+n)!}{t!} \frac{(-1)^t(a+b)^{t+n+1}}{2^{t+n+1}a^{s+t+1}b^{n+1-s}} \\
&= \frac{n!(a+b)^{n+1}}{2^{n+1}ab^{n+1}}
\sum_{t=0}^{n} {t+n \choose t} \frac{(-1)^t(a+b)^t}{2^ta^t}
\sum_{s=0}^{n-t} {n \choose s}{n \choose s+t}\frac{b^s}{a^s} \\
&= \frac{n!(a+b)^{n+1}}{2^{n+1}ab^{n+1}} \,g\left(\frac{b}{a}\right),
\end{align*}
where $g(z)$ is as defined by~\eqref{g1}. Now apply Lemma~\ref{L:L1} to complete the proof.
\end{proof}

Now we are ready to prove Theorem~\ref{L:1}.
\begin{proof}[Proof of Theorem~\ref{L:1}]
Let $x=k-\beta$ to deduce
$$
\lim_{k\rightarrow \alpha} \frac{\ud^{n}}{\ud k^{n}} \frac{(k-\gamma)^{n}(k-\delta)^{n}}{(k-\beta)^{n+1}} 
= \lim_{x\rightarrow \alpha-\beta} \frac{\ud^{n}}{\ud x^{n}} \frac{(x+\beta-\gamma)^{n}(x+\beta-\delta)^{n}}{x^{n+1}}.
$$
The relation~\eqref{E:rel} can be rearranged to give
$$
\alpha - \beta = \frac{2(\gamma-\beta)(\delta-\beta)}{(\gamma-\beta)+(\delta-\beta)}.
$$
Hence, Lemma~\ref{T:limit} can be applied with $a=\gamma-\beta$ and $b=\delta-\beta$, and noting that
$$
\frac{a+b}{2ab} = \frac{1}{\alpha-\beta}.
$$
The proof is completed by substituting the values above into the result of Lemma~\ref{T:limit}.
\end{proof}

\section{Appendix}
\label{S:7}
We evaluate $k(e^{-2\pi/\sqrt{10}})$. By \eqref{rp1} and~\eqref{rp3} we have
$$
\left(\frac{k}{1+k-k^2}\right)\left(\frac{1-k^2}{1-4k-k^2}\right)^2 = \frac{\eta^6(5\tau)}{\eta^6(\tau)}.
$$
Now put $\tau = i/\sqrt{10}$ so that $q=e^{-2\pi/\sqrt{10}}$ and apply the transformation formula for the eta function~\eqref{E:eta} to deduce
\begin{equation}
\label{E:X1}
\left.\left(\frac{k}{1+k-k^2}\right)\left(\frac{1-k^2}{1-4k-k^2}\right)^2\right|_{q=e^{-2\pi/\sqrt{10}}}
= \left(\frac{\eta(\frac{5i}{\sqrt{10}})}{\eta(\frac{i}{\sqrt{10}})}\right)^6 
=\frac{1}{125} \left(\frac{\eta(\frac{2i}{\sqrt{10}})}{\eta(\frac{10i}{\sqrt{10}})}\right)^6 .
\end{equation}
On the other hand, from~\eqref{rp2} and~\eqref{rp4} we have
$$
\left(\frac{1+k-k^2}{k}\right)^2\left(\frac{1-4k-k^2}{1-k^2}\right) = \frac{\eta^6(2\tau)}{\eta^6(10\tau)}
$$
and setting $\tau = i/\sqrt{10}$ gives
\begin{equation}
\label{E:X2}
\left.\left(\frac{1+k-k^2}{k}\right)^2\left(\frac{1-4k-k^2}{1-k^2}\right)\right|_{q=e^{-2\pi/\sqrt{10}}} = \left(\frac{\eta(\frac{2i}{\sqrt{10}})}{\eta(\frac{10i}{\sqrt{10}})}\right)^6.
\end{equation}
Combining~\eqref{E:X1} and~\eqref{E:X2} gives
$$
\left.\left(\frac{k}{1+k-k^2}\right)\left(\frac{1-k^2}{1-4k-k^2}\right)^2\right|_{q=e^{-2\pi/\sqrt{10}}}
= \frac{1}{125} \left.\left(\frac{1+k-k^2}{k}\right)^2\left(\frac{1-4k-k^2}{1-k^2}\right)\right|_{q=e^{-2\pi/\sqrt{10}}}
$$
and this simplifies to
$$
\left.\left(\frac{k}{1+k-k^2}\right)\left(\frac{1-k^2}{1-4k-k^2}\right)\right|_{q=e^{-2\pi/\sqrt{10}}} = \frac{1}{5}.
$$
This can be simplified further by putting $u=\frac{1}{k}-k$ to get
$$
\left(\frac{1}{u+1}\right)\left(\frac{u}{u-4}\right) = \frac15.
$$
Since $0<k<\sqrt{5}-2$ for $0<q<1$ by~\cite[p. 524]{cooperbook} 
we choose the positive solution $u = 4+2\sqrt{5}$. Hence,
\begin{equation}
\label{u}
\left[ \frac{1}{k}-k \right]_{q=e^{-2\pi/\sqrt{10}}} \; = 4+2\sqrt{5} .
\end{equation}
Solving for the positive value of $k$  gives
\begin{equation}
\label{keval}
k(e^{-2\pi/\sqrt{10}}) = \sqrt{10+4\sqrt{5}} - 2 - \sqrt{5}.
\end{equation}

\begin{remark}
Although evaluations of $k(q)$ have been considered by Y. Lee and Y. K. Park~\cite{lee} by other methods, 
they do not seem to have calculated the particular value in~\eqref{keval}.
\end{remark}

\end{document}